\newtheorem*{classics}{\bf Conjecture}
\newtheorem{theorem}{\bf Theorem}[section]
\newtheorem{lemma}{\bf Lemma}[section]
\newtheorem{coro}{\bf Corollary}[section]
\newcommand{\C}{{\mathbb C}}
\newcommand{\Gm}{{\mathbb G}_{\rm m}}
\newcommand{\Z}{{\mathbb Z}}
\newcommand{\Q}{{\Bbb Q}}
\newcommand{\R}{{\mathbb R}}
\newcommand{\V}{{\mathcal V}}
\newcommand{\Hy}{{\mathcal H}}
\newcommand{\bea}{\begin{eqnarray*}}
\newcommand{\eea}{\end{eqnarray*}}
\newcommand{\be}{\begin{eqnarray}}
\newcommand{\ee}{\end{eqnarray}}
\newcommand{\ve}{\boldsymbol}
\newcommand{\spn}{{\rm span}}
\newcommand{\rank}{\mbox{rank}\,}
\newcommand{\lcm}{\mbox{lcm}\,}
\newcommand{\Nc}{N_{\rm tor}}
\begin{document}

\setlength{\unitlength}{1mm}
\setcounter{page}{1}

\title{Solving algebraic equations in roots of unity   
}
\author{Iskander Aliev and Chris Smyth}
%



\maketitle

\begin{abstract}
This paper is devoted to finding solutions of polynomial equations
in roots of unity.  It was conjectured by S. Lang and proved by M.
Laurent that all such solutions can be described in terms of a
finite number of parametric families called maximal torsion
cosets. We obtain new explicit upper bounds for the number of
maximal torsion cosets on an algebraic subvariety of the complex algebraic $n$-torus $\Gm^n$. In contrast to earlier works that give the bounds of polynomial growth in
the maximum total degree of defining polynomials, the proofs of our results are constructive. This allows us to obtain a new algorithm for determining maximal torsion cosets on an algebraic subvariety of $\Gm^n$.


\bigskip
\noindent {\bf 2000 MS Classification:} Primary 11G35; Secondary
11R18.
\end{abstract}

\section{Introduction}

Let $f_1,\ldots,f_t$ be the polynomials in $n$ variables defined
over $\C$. In this paper we deal with solutions of the system
\be \left \{\begin{array}{l} f_1(X_1,\ldots,X_n)=0\,\\ \;\;\vdots \\
f_t(X_1,\ldots,X_n)=0\,
\end{array} \right.\label{Polynomial_system} \ee
in roots of unity. It will be convenient to think of such
solutions as {\em torsion points} on the subvariety ${\mathcal
V}(f_1,\ldots,f_t)$ of the complex algebraic torus $\Gm^n$ defined
by the system (\ref{Polynomial_system}).
%
%
As an affine variety, we identify $\Gm^n$ with the Zariski open
subset $x_1x_2\cdots x_n\neq 0$ of affine space ${\mathbb A}^n$,
with the usual multiplication
\bea (x_1,x_2,\ldots,x_n) \cdot
(y_1,y_2,\ldots,y_n)=(x_1y_1,x_2y_2,\ldots,x_ny_n)\,. \eea
By {\em algebraic subvariety} of  $\Gm^n$ we understand a Zariski
closed subset. An {\em algebraic subgroup} of $\Gm^n$ is a Zariski
closed subgroup. A {\em subtorus} of $\Gm^n$ is a geometrically
irreducible algebraic subgroup. A {\em torsion coset} is a coset
${\ve \omega}H$, where $H$ is a subtorus of $\Gm^n$ and ${\ve
\omega}=(\omega_1, \ldots, \omega_n)$ is a torsion point.
Given an algebraic subvariety ${\mathcal V}$ of $\Gm^n$,  a
torsion coset $C$ is called {\em maximal} in ${\mathcal V}$ if
$C\subset{\mathcal V}$ and it is not properly contained in any
other torsion coset in ${\mathcal V}$. A maximal $0$--dimensional
torsion coset will be also called {\em isolated} torsion point.

Let $\Nc({\mathcal V})$ denote the number of maximal torsion
cosets contained in ${\mathcal V}$.
%
%
A famous conjecture by Lang (\cite{Lang}, p. 221) proved by
McQuillan \cite{McQuillan} implies as a special case that
$\Nc({\mathcal V})$ is finite. This special case had been settled
by Ihara, Serre and Tate (see Lang \cite{Lang}, p. 201) when
$\dim({\mathcal V})=1$,
and by Laurent \cite{Laurent} 
if $\dim ({\mathcal V})> 1$.
A different proof of this result was also given by Sarnak and
Adams \cite{Sarnak}. 
It follows that all solutions of the system
(\ref{Polynomial_system}) in roots of unity can be described in
terms of a finite number of maximal torsion cosets on the
subvariety ${\mathcal V}(f_1,\ldots,f_t)$. It is then of interest
to obtain an upper bound for this number.
Zhang \cite{Zhang_JAMS} and  Bombieri and Zannier
\cite{Bombieri-Zannier}  showed that  if ${\mathcal V}$ is defined
over a number field $K$ then $\Nc({\mathcal V})$ is effectively
bounded in terms of $d$, $n$, $[K:\Q]$ and $M$, when the defining
polynomials  were of total degrees at most $d$ and heights at most
$M$.
Schmidt \cite{Schmidt} found an explicit upper bound for the
number of maximal torsion cosets on an algebraic subvariety of
$\Gm^n$ that depends only on the dimension $n$ and the maximum
total degree $d$ of the defining polynomials. Indeed, let
\bea \Nc(n,d)=\max_{{\mathcal V}}\; \Nc({\mathcal V})\,, \eea
where the maximum is taken over all subvarieties ${\mathcal
V}\subset \Gm^n$ defined by polynomial equations of total degree
at most $d$. The proof of Schmidt's bound is based on a result of
Schlickewei \cite{Schlickewei_AA96} about the number of
nondegenerate solutions of a linear equation in roots of unity.
This latter result was significantly improved by Evertse
\cite{Evertse}, and the resulting Evertse--Schmidt bound can then
be stated as
\be \Nc(n,d)\le (11d)^{n^2}   {{n+d}\choose d}^{3{{n+d}\choose
d}^2}\,.\label{Schmidt_Evertse_bound}\ee

Applying technique from arithmetic algebraic geometry,
David and Philippon \cite{David-Philippon} went even further and obtained a polynomial in $d$
upper bound for the number of isolated torsion points, with the
exponent being essentially $7^k$, where $k$ is the dimension of the
subvariety. This result have been since slightly improved by Amoroso and David \cite{Amoroso-David}.
A polynomial bound for the number of all maximal torsion cosets
also appears in the main result of R\'emond \cite{Remond}, with the exponent  $(k+1)^{3(k+1)^2}$.

It should be mentioned here that the last two bounds are special cases of more general results.
David and Philippon \cite{David-Philippon} in fact study the number of algebraic points with small height
and R\'emond \cite{Remond} deals with subgroups of finite rank and even with
thickness of such subgroups in the sense of the height.
The high generality of the results requires applying sophisticated tools from arithmetic algebraic
geometry. This approach involves work with heights in the fields of algebraic numbers and a delicate specialization
argument (see e. g. Proposition 6.9 in David and Philippon \cite{David-Philippon-2}) that allows to transfer the results to algebraically closed fields of characteristics $0$.

In this paper we present a constructive and more elementary approach to this problem which is based on well--known arithmetic properties of the roots of unity.
Roughly speaking, we use the Minkowski geometry of numbers to reduce the problem to a very special case and then apply an intersection/elimination argument.
This allows us to obtain a polynomial bound  with the exponent $5^n$ for the number of maximal torsion cosets lying on a
subvariety of $\Gm^n$ defined over $\C$ and implies an
algorithm for finding all such cosets. The algorithm is presented in Section \ref{Algorithm}.

One should point out here that the current literature seems to contain only two algorithms for finding all the maximal torsion cosets
on a subvariety of $\Gm^n$. First one was proposed by Sarnak and Adams in \cite{Sarnak} and the second is due to Ruppert \cite{Ruppert}.
Note also that in view of its high complexity, the algorithm of Ruppert is described in \cite{Ruppert} only for a special choice of defining
polynomials.


%
%
%
%

\subsection{The main results} 
We shall start with  the case of
hypersurfaces.
\begin{theorem}
Let $f\in\C[X_1,\ldots,X_n]$, $n\ge 2$, be a polynomial of total
degree $d$ and let $\Hy=\Hy(f)$ be the hypersurface in $\Gm^n$
defined by $f$. Then
\be  \Nc(\Hy)\le c_1(n)\; d^{\;c_2(n)}\,
\label{bound_for_just_one_polynomial}\ee
with 
%
\bea c_1(n)=
n^{\frac{3}{2}(2+n)5^n}\,\;\mbox{and}\,\;\;\;c_2(n)=\frac{1}{16}(49
\cdot 5^{n-2}-4n-9)\,. \eea
\label{just_one_polynomial}
\end{theorem}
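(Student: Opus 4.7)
The plan is to prove the bound by induction on $n$, guided by the arithmetic of the exponent: a short calculation shows $c_2(n+1)=5\,c_2(n)+n+2$ with base value $c_2(2)=2$. This suggests that the inductive step should convert a hypersurface problem in $\Gm^{n+1}$ of degree $d$ into one in $\Gm^{n}$ of degree $\lesssim d^{5}$, and that the base case is essentially the classical Ihara--Serre--Tate bound for plane curves, made effective by separating the maximal cosets of positive dimension (which are translates of subtori of $\Gm^2$ contained in $\Hy$) from the isolated torsion points.

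\textbf{Inductive step.} Fix $\Hy=\Hy(f)\subset\Gm^{n+1}$ of degree $d$. Positive-dimensional maximal cosets $\ve\omega H\subset\Hy$ correspond to saturated sublattices $\Lambda\subset\Z^{n+1}$ defining $H$ such that $f$ restricted to $\ve\omega H$ vanishes identically; after a monomial change of coordinates adapted to $\Lambda$ this reduces to a vanishing problem for a polynomial in fewer variables, which the inductive hypothesis handles, with the number of admissible $\Lambda$ bounded in terms of the Newton polytope (hence the monomial support) of $f$. For an isolated torsion point $\ve\omega=(\omega_1,\dots,\omega_{n+1})$, the elementary key is to apply Minkowski's theorem to the lattice $\{\ve v\in\Z^{n+1}:\prod_i\omega_i^{v_i}=1\}$: this produces a short nonzero $\ve v$ such that the monomial $X^{\ve v}$ takes at $\ve\omega$ a root-of-unity value $\zeta$ of comparatively small order. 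The binomial equation $X^{\ve v}=\zeta$ has controlled degree, and combining it with $f=0$ via a resultant eliminating one variable yields a hypersurface in $\Gm^n$ of degree polynomial in $d$ which still contains (a projection of) $\ve\omega$. Summing over the finitely many admissible $\zeta$ and over classes of short vectors $\ve v$ contributes a combinatorial factor that is absorbed into $c_1(n)$, and invoking the inductive hypothesis yields the claimed bound on $\Nc(\Hy)$.

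\textbf{Main obstacle.} The delicate point is calibrating the two successive degree blow-ups---first from Minkowski (the length of $\ve v$ governs the degree of the auxiliary binomial) and then from the resultant or elimination---so that their combined effect multiplies the exponent $c_2(n)$ by exactly $5$, matching the recurrence rather than producing something larger. Equally, the combinatorial count of admissible auxiliary equations, both in $\zeta$ and in the short $\ve v$, must be kept compatible with $c_1(n)=n^{\frac{3}{2}(2+n)5^{n}}$; this will almost certainly require partitioning the isolated torsion points according to the size and support of their Minkowski-short relations and analysing each class separately. The base case $n=2$, although classical, must also be made explicit enough to feed the induction cleanly, and care is needed to ensure that maximal cosets on $\Hy$ remain maximal (and are not double-counted) after the elimination step.
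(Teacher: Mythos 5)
You correctly reverse-engineered the recurrence $c_2(n+1)=5\,c_2(n)+n+2$, and both the split into positive-dimensional cosets versus isolated points and the idea of eliminating one variable by a resultant are in the spirit of the paper (which derives $T(n,d)\le(2nd)^{n+1}T(n-1,n^{8+4n}d^2)T(n-1,n^{8+4n}d^3)$). However, the mechanism you propose for producing the auxiliary equation at an isolated torsion point does not work. Applying Minkowski's theorem to $\Lambda=\{\ve v\in\Z^{n+1}:\prod_i\omega_i^{v_i}=1\}$ yields a nonzero $\ve v$ of length roughly $\det(\Lambda)^{1/(n+1)}$, but $\det(\Lambda)$ equals the order of $\ve\omega$ as a torsion point, which is not bounded in terms of $d$ (vanishing sums of roots of unity of total degree $d$ can involve roots of unity of very large order). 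So the "controlled degree" claim for $X^{\ve v}=\zeta$ fails, and with it the calibration of the degree blow-up; the "finitely many admissible $\zeta$" is also ungrounded, since you would be trying to bound the number of torsion points by a sum indexed by those same points. Geometry of numbers does appear in the paper, but only to straighten the coordinates of a torsion coset $D$ found on the resultant hypersurface $\Hy(g_k)\subset\Gm^{n-1}$, where the relevant lattice has determinant bounded by $(4d^2)^{n-1-s}$ because $\mathrm{diam}(S_{g_k})<4d^2$ — there the Minkowski/Jarn\'ik estimates really do control the degree.

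What is actually missing is the paper's intersection argument (Theorem~\ref{second_polynomial}), which is the heart of the proof. After normalizing so that $f$ is irreducible with $L(f)=\Z^n$ (Lemmas~\ref{New_polynomial_with_full_lattice} and~\ref{New_polynomial_with_Z^n}, neither of which your sketch addresses), the paper produces at most $2^{n+1}-1$ polynomials $f_i$ of degree $\le 2d$, each coprime to $f$, such that \emph{every} torsion coset on $\Hy(f)$ also lies on some $\Hy(f_i)$. These $f_i$ are obtained from $f$ by the substitutions $X_j\mapsto\pm X_j$ and $X_j\mapsto\pm X_j^2$ (with Galois twists when the coefficients lie in $\C$), exploiting the arithmetic of roots of unity from Lemma~\ref{basic_properties} — a genuinely different source of the auxiliary hypersurface than the relation lattice of a particular torsion point, and crucially one whose degree ($\le 2d$) and count ($\le 2^{n+1}-1$) are uniform. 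The resultants $g_k=\mathrm{Res}(f,f_k,X_n)$ then have degree $\le 2d^2$, which, combined with the geometry-of-numbers straightening (giving the $d\mapsto\approx d^3$ blow-up), produces the exponent factor of $5$. Without the intersection argument the induction has no uniform auxiliary hypersurface to eliminate against, and the proof cannot close.
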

%
%
%
%
%
%

Let $f\in\C[X_1,\ldots,X_n]$ be a polynomial of degree $d_i$ in
$X_i$. Ruppert \cite{Ruppert} conjectured that the number of
isolated torsion points on $\Hy(f)$ is bounded by $c(n)\;d_1\cdots
d_n$. Theorem \ref{just_one_polynomial} is a step towards proving
this conjecture.
Furthermore, the results of Beukers and Smyth \cite{Beukers-Smyth}
for the plane curves (see Lemma \ref{BS_main} below) indicate that
the following stronger conjecture might be true.

\begin{classics} The number of isolated torsion points on
the hypersuface $\Hy(f)$ is bounded by $c(n){\rm vol}_n(f)$, where
${\rm vol}_n(f)$ is the $n$-volume of the Newton polytope of the
polynomial $f$.
\end{classics}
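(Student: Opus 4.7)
The plan is to adapt the intersection/elimination strategy of this paper but to replace the Bezout-style degree count by the Bernstein--Kushnirenko--Khovanskii (BKK) theorem, so that the final estimate is in terms of a mixed volume of Newton polytopes rather than of $d^n$. Since the BKK bound in $\Gm^n$ is governed precisely by Newton-polytope volumes, this is the natural tool for targeting the conjectural bound $c(n)\,{\rm vol}_n(f)$.

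After a standard specialisation reducing to $f\in\Z[X_1,\ldots,X_n]$, the key input inherited from the proof of Theorem~\ref{just_one_polynomial} is that for each isolated torsion point $\boldsymbol{\omega}$ of order $N$ on $\Hy(f)$, every Galois conjugate $(\omega_1^a,\ldots,\omega_n^a)$ with $\gcd(a,N)=1$ is again an isolated torsion point on $\Hy(f)$. Equivalently, $\boldsymbol{\omega}$ lies in the common zero locus of $f$ and of each auxiliary polynomial $f_A(\boldsymbol{X}):=f(\boldsymbol{X}^A)$, where $A$ is an integer matrix built from a diagonal Frobenius $X_i\mapsto X_i^p$ composed with a unimodular change of variable; the Newton polytope of $f_A$ equals $A\cdot\Delta(f)$. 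One then selects $n-1$ such matrices $A_1,\ldots,A_{n-1}$ for which the system $\{f=f_{A_1}=\cdots=f_{A_{n-1}}=0\}$ cuts out a zero-dimensional subvariety of $\Gm^n$ containing every isolated torsion point of $\Hy(f)$. The BKK theorem then bounds the number of isolated common zeros --- and hence a fortiori the number of isolated torsion points of $\Hy(f)$ --- by
\[
n!\,{\rm MV}\bigl(\Delta(f),A_1\Delta(f),\ldots,A_{n-1}\Delta(f)\bigr).
\]

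The hard part is to engineer the matrices $A_i$ so that (i) the intersection really is proper at each isolated torsion point, and (ii) the mixed volume on the right is bounded by $c(n)\,{\rm vol}_n(\Delta(f))$. Condition (ii) is the main obstacle: unimodular images of the same polytope in ``different directions'' can inflate the mixed volume far beyond ${\rm vol}_n(\Delta(f))$, while non-unimodular substitutions scale it by $|\det A_i|$. The natural route is a Minkowski-style reduction of $\Delta(f)$ in the ambient lattice $\Z^n$, followed by restricting the $A_i$ to a family of unimodular substitutions that preserve the reduced polytope up to bounded dilation; the polynomial expansion of ${\rm vol}_n(\Delta(f)+\sum_i A_i\Delta(f))$ in mixed volumes then converts bounded dilation into the required upper bound on ${\rm MV}$. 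The Beukers--Smyth estimate (Lemma~\ref{BS_main}) is the $n=2$ instance of precisely this programme and indicates that the reduction step is feasible in general, though executing it uniformly in $n$ is where the bulk of the work lies.
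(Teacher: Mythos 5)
The statement you are addressing is not proved anywhere in the paper: it is stated there explicitly as a \emph{conjecture}, motivated only by the Beukers--Smyth bound for $n=2$ (Lemma \ref{BS_main}), and the theorem the paper actually proves (Theorem \ref{just_one_polynomial}) gives $c_1(n)\,d^{\,c_2(n)}$ with $c_2(n)$ growing like $5^n$ --- nowhere near the conjectured $c(n)\,\mathrm{vol}_n(f)$. So there is no proof to compare against, and your text is, by its own admission, a programme rather than a proof.

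Two of its steps are genuine gaps, not routine verifications. First, the existence of $n-1$ substitutions $A_1,\ldots,A_{n-1}$ for which the \emph{single} system $f=f_{A_1}=\cdots=f_{A_{n-1}}=0$ is zero-dimensional and still contains every isolated torsion point of $\Hy(f)$ is unjustified and almost certainly false as stated: the relevant conjugation $\omega\mapsto\pm\omega^{p}$ of Lemma \ref{basic_properties} depends on the order of the root of unity, so different torsion points are caught by different auxiliary polynomials. This is precisely why Theorem \ref{second_polynomial} produces $2^{n+1}-1$ polynomials with only an ``or'' in condition (iii), and why the paper must intersect with one auxiliary hypersurface at a time, eliminate a variable, and recurse --- squaring the degree at each step, which is where the exponent $5^n$ comes from. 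Moreover, even for a genuinely zero-dimensional auxiliary system, an isolated torsion point of $\Hy(f)$ need not be an isolated point of that system (it may sit on a positive-dimensional component of the intersection), so the BKK count does not automatically include it. Second, the inequality $\mathrm{MV}\bigl(\Delta(f),A_1\Delta(f),\ldots,A_{n-1}\Delta(f)\bigr)\le c(n)\,\mathrm{vol}_n(\Delta(f))$ is the heart of the matter and you leave it entirely open; it fails badly for general unimodular $A_i$ (take $\Delta(f)$ a thin simplex: the mixed volume of suitably chosen unimodular images is of the order of the product of its lattice widths, which can exceed its volume by an arbitrarily large factor), and the ``Minkowski-style reduction'' you invoke does not yet pin the $A_i$ down to a class for which the bound holds. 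Until both points are supplied, what you have is a plausible plan --- close in spirit to how Beukers and Smyth handle $n=2$ --- but not a proof of the conjecture.
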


Concerning general varieties, we obtained the following result.

\begin{theorem}
For $n\ge 2$ we have
%
\be  \Nc(n,d) \le c_3(n)\; d^{\;c_4(n)} \,,
\label{samaja_glavnaja}\ee
where
\bea c_3(n)= n^{(2+n)2^{n-2}\sum_{i=2}^{n-1}c_2(i)}\prod_{i=2}^n
c_1(i)\,\;\mbox{and}\;\; c_4(n)=\sum_{i=2}^n c_2(i)2^{n-i} +
2^{n-1}\,. \eea
\label{Polynomial_bound}
\end{theorem}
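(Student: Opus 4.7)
I would prove Theorem~\ref{Polynomial_bound} by induction on $n$, using Theorem~\ref{just_one_polynomial} as the engine at each inductive step. The very form of the constants supports this approach: from the closed-form expression one verifies directly that
\begin{equation*}
c_4(n) \;=\; c_2(n) + 2\,c_4(n-1)\qquad(n\ge 3),
\end{equation*}
which is precisely the exponent one would obtain by applying Theorem~\ref{just_one_polynomial} to one defining hypersurface in $\Gm^n$ (contributing $d^{c_2(n)}$) and then the inductive bound in $\Gm^m$ for $m<n$ applied to polynomials of degree $d^{2}$ (contributing $d^{2c_4(n-1)}$). The constant $c_3$ admits a parallel multiplicative recursion picking up the factor $c_1(n)$ from Theorem~\ref{just_one_polynomial} and a polynomial-in-$n$ factor coming from the pullback.

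For the base case $n=2$, let $\V=\V(f_1,\ldots,f_t)\subseteq \Hy(f_1)\subset\Gm^2$. Theorem~\ref{just_one_polynomial} produces at most $c_1(2)d^{c_2(2)}$ maximal torsion cosets of $\Hy(f_1)$. Those entirely contained in $\V$ are automatically maximal in $\V$. The remaining contribution comes from $1$-dimensional cosets $\omega H\subset\Hy(f_1)$ not contained in $\V$; pulling back a further defining polynomial through a monomial parametrization $\Gm^1\to H$ yields a nonzero Laurent polynomial whose number of roots bounds the isolated torsion points of $\V$ in $\omega H$, and summation over the $\le c_1(2)d^{c_2(2)}$ such $H$'s fits the claimed shape $c_3(2)d^{c_4(2)}$.

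For the inductive step in dimension $n\ge 3$, I would fix $\V=\V(f_1,\ldots,f_t)$ with $\deg f_i\le d$ and apply Theorem~\ref{just_one_polynomial} to $\Hy(f_1)$, obtaining at most $c_1(n)d^{c_2(n)}$ maximal torsion cosets $\omega_j H_j$ of $\Hy(f_1)$, all of dimension $\le n-1$. Each maximal torsion coset $C$ of $\V$ lies in exactly one $\omega_j H_j$, yielding two cases. \emph{Case A:} $\omega_j H_j\subseteq\V$, whence $C=\omega_j H_j$ by maximality in $\Hy(f_1)\supseteq \V$; these contribute at most $c_1(n)d^{c_2(n)}$ cosets. \emph{Case B:} $\omega_j H_j\not\subseteq \V$. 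Translating by $\omega_j^{-1}$ and composing with a monomial isomorphism $\psi_j:\Gm^{m_j}\to H_j$ (where $m_j=\dim H_j\le n-1$), the maximal torsion cosets of $\V$ strictly inside $\omega_j H_j$ correspond bijectively to maximal torsion cosets of the proper subvariety $\psi_j^{-1}(\omega_j^{-1}\V)\subset\Gm^{m_j}$, defined by the pullbacks $\tilde f_2,\ldots,\tilde f_t$. Applying the inductive hypothesis at dimension $m_j$ gives a bound $c_3(m_j)(\deg\tilde f_i)^{c_4(m_j)}$ per subtorus.

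The main obstacle is quantitative control of the pullbacks. One must extract from the proof of Theorem~\ref{just_one_polynomial} a bound of the form $\deg\tilde f_i\le d^{2}\cdot\mathrm{poly}(n)$, reflecting the complexity of the monomial parametrization $\psi_j$ of a subtorus produced there; concretely, the exponents of the monomials defining $\psi_j$ must be shown to be at most $d\cdot\mathrm{poly}(n)$. This degree-squaring is exactly what drives the recursion $c_4(n)=c_2(n)+2c_4(n-1)$, and the polynomial-in-$n$ factor feeds the $n^{(2+n)2^{n-2}\sum c_2(i)}$ piece of $c_3(n)$. Granted this bound, summing the inductive estimate over the $\le c_1(n)d^{c_2(n)}$ subtori and adding the Case~A contribution yields
\begin{equation*}
\Nc(n,d)\;\le\;c_1(n)d^{c_2(n)}\Bigl(1+c_3(n-1)\bigl(d^{2}\cdot\mathrm{poly}(n)\bigr)^{c_4(n-1)}\Bigr),
\end{equation*}
and a routine verification against the closed-form recursions identifies the right-hand side with $c_3(n)d^{c_4(n)}$.
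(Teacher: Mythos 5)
Your plan reproduces the paper's strategy essentially exactly: the paper proves Lemma~\ref{v_via_t}, namely $\Nc(n,d)\le T(n,d)\,\Nc(n-1,n^{2+n}d^2)$, by picking one defining polynomial $f_1$, enumerating its maximal torsion cosets via Theorem~\ref{just_one_polynomial}, and for each coset $C=\ve\omega H_A\subset\Hy(f_1)$ with $C\not\subseteq\V$, reducing the count of maximal cosets of $\V$ inside $C$ to a subvariety of $\Gm^{n-1}$ with defining degree $\le n^{2+n}d^2$, then unwinding the recursion down to $\Nc(1,d)=d$. Your recursion $c_4(n)=c_2(n)+2c_4(n-1)$ is a correct reading of the exponent in the closed form, and your Case A/Case B split is exactly what the paper does.

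The genuine gap is precisely the step you flag as "the main obstacle" and then grant: the bound $\deg\tilde f_j\le d^2\cdot\mathrm{poly}(n)$ for the pulled-back defining polynomials. This is not a loose end that can be deferred to the proof of Theorem~\ref{just_one_polynomial} — the parametrizing monomials of the subtorus $H_A$ are not automatically of small height, and the proof of Theorem~\ref{just_one_polynomial} does not on its own give a uniform estimate you can cite. The paper fills the gap with a self-contained geometry-of-numbers argument: (a) since $C=\ve\omega H_A$ is a maximal torsion coset on $\Hy(f_1)$, the subspace $\spn_{\R}(A)$ is spanned by $n-i$ vectors of the difference set $D(S_{f_1})$, giving $\det(A)<(2d)^{n-i}$; (b) Corollary~\ref{suitable_matrix} (built on Minkowski's second theorem and Jarn\'{\i}k's inequality, Lemma~\ref{suitable_basis}) then produces a basis ${\bf A}$ of $\Z^n$ with ${\ve a}_1\in A$ whose polar basis has $\ell_\infty$-norms $O(n\cdot n!\cdot\det(A)^{1/(n-i)})=O(n\cdot n!\cdot d)$; (c) after the associated monoidal change of coordinates and the substitution $Y_1=\omega$, this yields $\deg f_j^{\bf A}(\omega,Y_2,\ldots,Y_n)< n(n+1)d+(n-1)(n^2-1)n!\,d^2<n^{2+n}d^2$. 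Your proposal should incorporate this argument rather than assume it. A minor structural difference: you propose to descend to $\Gm^{m_j}$ with $m_j=\dim H_j$, whereas the paper descends uniformly to $\Gm^{n-1}$ by fixing only the single coordinate $Y_1=\omega$. Both are valid; the paper's version is simpler to iterate and is what the stated constants $c_3,c_4$ reflect.
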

%
%
%
%
%

It should be pointed out that  the constants $c_i(n)$ in Theorems
\ref{just_one_polynomial} and \ref{Polynomial_bound} 
%
could be certainly improved.
To simplify the presentation, we tried to avoid painstaking
estimates.



\subsection{An intersection argument} For ${\ve i}\in\Z^n$, we
abbreviate ${\ve X}^{\ve i}=X_1^{i_1}\cdots X_n^{i_n}$.
Let
\bea f({\ve X})=\sum_{{\ve i}\in \Z^n}a_{\ve i}{\ve X}^{\ve i}\eea
 be a
Laurent polynomial. By the {\em support} of $f$ we mean the set
\bea S_f=\{{\ve i}\in \Z^n: a_{\ve i}\neq 0\}\, \eea
and by the {\em exponent lattice} of $f$ we mean the lattice
$L(f)$ generated by the difference set $D(S_f)=S_f-S_f$, so that
\bea L(f)={\rm span}_{\Z}\{D(S_f)\}\,. \eea
Our next result and its proof is a generalization of that for
$n=2$ in Beukers and Smyth \cite{Beukers-Smyth}.
\begin{theorem}
Let $f\in \C[X_1,\ldots,X_n]$, $n\ge 2$, be an irreducible
polynomial with $L(f)=\Z^n$. Then for some $m$ with $1\le m\le
2^{n+1}-1$ there exist $m$ polynomials $f_1, f_2,\ldots,f_m$ with
the following properties:
\begin{itemize}
\item[{\rm (i)}] $\deg(f_i)\le 2\deg(f)$ for $i=1,\ldots,m$;
%
\item[{\rm (ii)}] For $1\le i\le m$ the polynomials $f$ and $f_i$
have no common factor;

\item[{\rm(iii)}] For any  torsion coset $C$ lying on the
hypersurface $\Hy(f)$ there exists some $f_i$, $1\le i\le m$, such
that
 the coset $C$ also lies on the hypersurface
$\Hy(f_i)$.
\end{itemize}

\label{second_polynomial}\end{theorem}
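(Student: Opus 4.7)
I will generalise the $n=2$ construction of Beukers and Smyth. The two basic operations on $f$ to be combined are the sign substitutions $X_i\mapsto\epsilon_i X_i$ for $\ve\epsilon=(\epsilon_1,\ldots,\epsilon_n)\in\{\pm1\}^n$ and a ``reciprocal-conjugate'' operation $X_i\mapsto X_i^{-1}$ together with complex conjugation of the coefficients. The latter is decisive because every subtorus $H$ of $\Gm^n$ is defined over $\Q$, hence closed under complex conjugation, while every root of unity $\omega$ satisfies $\bar\omega=\omega^{-1}$. Consequently, if $C=\ve\omega H\subset\Hy(f)$ is a torsion coset, then conjugating the identity $f(\ve\omega\ve h)\equiv 0$ in $\ve h\in H$ gives $\bar f(\ve\omega^{-1}\bar{\ve h})\equiv 0$; since $\ve h\mapsto\bar{\ve h}$ and $\ve h\mapsto\ve h^{-1}$ are both automorphisms of $H$, reparametrising yields $\bar f(\ve\omega^{-1}\ve h^{-1})\equiv 0$, and clearing denominators by $\ve X^{\ve d}$ with $d_i=\max_{\ve I\in S_f}I_i$ shows that the polynomial
\[ \tilde f(\ve X):=\ve X^{\ve d}\,\bar f\bigl(X_1^{-1},\ldots,X_n^{-1}\bigr) \]
vanishes identically on $C$. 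Thus already the single polynomial $\tilde f$ delivers property (iii).

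My candidate list has two families. Family A: the $2^n-1$ sign-twists $f_{\ve\epsilon}(\ve X):=f(\epsilon_1 X_1,\ldots,\epsilon_n X_n)$ for $\ve\epsilon\ne(1,\ldots,1)$, each of degree $\deg f$. Family B: the $2^n$ reciprocal-conjugate sign-twists $\tilde f_{\ve\epsilon}(\ve X):=\ve X^{\ve d}\,\bar f(\epsilon_1 X_1^{-1},\ldots,\epsilon_n X_n^{-1})$. In total at most $2^{n+1}-1$ polynomials. For Family~A, property (ii) follows from $L(f)=\Z^n$: if $f_{\ve\epsilon}$ with $\ve\epsilon\ne\ve 1$ shared a factor with the irreducible $f$ of the same degree, then $f_{\ve\epsilon}=cf$ for some $c\in\C^{*}$, and comparing coefficients monomial by monomial forces $\ve\epsilon^{\ve I-\ve I'}=1$ for all $\ve I,\ve I'\in S_f$, hence $\ve\epsilon^{\ve v}=1$ for every $\ve v\in L(f)=\Z^n$, which forces $\ve\epsilon=\ve 1$, a contradiction. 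An analogous character argument applied to a putative relation $\tilde f_{\ve\epsilon}=c\ve X^{\ve v}f$ shows that $\tilde f_{\ve\epsilon}$ can share a factor with $f$ only under a restrictive self-reciprocal-conjugate symmetry of the coefficients of $f$, and $L(f)=\Z^n$ prevents this symmetry from holding simultaneously for every $\ve\epsilon\in\{\pm1\}^n$; at least one member of Family~B therefore survives.

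For property (i), the sign-twists have degree exactly $\deg f$, while each reciprocal-conjugate sign-twist has degree $\sum d_i-\min_{\ve I\in S_f}\sum I_i\le\sum d_i$. When $n=2$ this is at once $\le 2\deg f$. For $n\ge 3$ the naive bound $\sum d_i\le n\deg f$ is insufficient, and one must apply a degree-reduction step: since $f$ itself vanishes on every torsion coset, subtracting a suitable polynomial multiple of $f$ from $\tilde f_{\ve\epsilon}$ preserves property~(iii), and this multiple can be chosen, by a Euclidean-style step in an appropriate monomial order, so that the reduced polynomial has degree $\le 2\deg f$ without becoming a scalar multiple of $f$.

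The principal obstacles are therefore two-fold. First, for $n\ge 3$, making the degree reduction to $\le 2\deg f$ completely rigorous while simultaneously preserving property~(ii)---one must verify that the Euclidean-style subtraction never accidentally produces a scalar multiple of $f$. Second, handling the exceptional case in which $\tilde f_{\ve 1}$ is itself a monomial multiple of $f$; here the whole Family~A together with other members of Family~B must be combined to guarantee that, for each torsion coset $C$, some candidate in the list of $2^{n+1}-1$ polynomials both contains $C$ and shares no factor with $f$. Tracking the interplay between the sign-translation action $\ve\omega H\mapsto\ve\epsilon\ve\omega H$ on torsion cosets and the reciprocal-conjugate operation, while exploiting $L(f)=\Z^n$ at every stage, is the technical heart of the generalisation from the $n=2$ case of Beukers and Smyth.
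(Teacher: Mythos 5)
Your reciprocal--conjugate observation is a nice idea, and the argument that $\tilde f:=\ve X^{\ve d}\bar f(X_1^{-1},\ldots,X_n^{-1})$ vanishes on every torsion coset of $\Hy(f)$ is correct (subtori are defined over $\Q$, hence stable under conjugation and inversion, and $\bar\omega=\omega^{-1}$ for torsion points). But there is a fatal gap in the completion of the proof. The hypothesis $L(f)=\Z^n$ does \emph{not} prevent $\tilde f$ from being a monomial multiple of $f$. Take for instance $f(X,Y)=XY+2X+2Y+1$: it is irreducible over $\Q$, $L(f)=\Z^2$, $\bar f=f$ and $\tilde f=XYf(X^{-1},Y^{-1})=f$. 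In such a case $\tilde f_{\ve 1}$ fails property (ii), and none of the remaining candidates in your list recovers property (iii). Indeed, $f_{\ve\epsilon}$ with $\ve\epsilon\ne\ve 1$ need not vanish on a given torsion coset $C\subset\Hy(f)$ (it vanishes on $\ve\epsilon^{-1}C$, a coset of a different hypersurface), and $\tilde f_{\ve\epsilon}$ with $\ve\epsilon\ne\ve 1$ also fails, because your reparametrisation requires $\ve\epsilon\ve h^{-1}\in H$, which forces $\ve\epsilon\in H$. So in the self-reciprocal case your list of $2^{n+1}-1$ polynomials simply does not contain, for an arbitrary torsion coset $C$, a polynomial that both contains $C$ and is coprime to $f$. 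This is not a loose end; it is the substance of the theorem, and your plan of ``combining Family A with other members of Family B'' has no mechanism to produce one.

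The missing ingredient is precisely Lemma \ref{basic_properties}\,(ii) of Beukers and Smyth, which the paper uses as the engine of its proof: every root of unity $\omega$ is Galois-conjugate to some $\omega^p$ with $\omega^p\in\{-\omega,\,\pm\omega^2\}$. This is what licenses both families of candidate polynomials $f(\ve\epsilon\ve X)$ and $f(\ve\epsilon\ve X^2)$ \emph{and} guarantees, coset by coset, that at least one of them vanishes on $C$ while $L(f)=\Z^n$ rules out any of them sharing a factor with $f$. The paper's proof is also organised by the coefficient field of $f$ (rational, cyclotomic, or strictly transcendental over $\Q^{\rm ab}$), with the cyclotomic case requiring a careful choice of a Galois automorphism $\tau$ depending on a minimal conductor $N$; you do not carry out anything of this kind. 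Separately, your degree bound for property (i) is not established for $n\ge 3$: $\deg\tilde f_{\ve\epsilon}$ can reach $\sum d_i$, which exceeds $2\deg f$ in general, and the proposed Euclidean subtraction of multiples of $f$ is left entirely informal and comes with no guarantee that the reduced polynomial remains coprime to $f$. The paper's square-twists $f(\ve\epsilon\ve X^2)$ achieve degree exactly $2\deg f$ with no reduction step needed.
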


\section{Lemmas required for the proofs}

In this section, we give the definitions and basic lemmas we need
in the rest of paper.

\subsection{Finding the cyclotomic part of a polynomial in one
variable}

Let us consider the following one-variable version of the problem:
given a polynomial $f\in \C[X]$, find all roots of unity $\omega$
that are zeroes of $f$. This is equivalent to finding the factor
of $f$ consisting of the product of all distinct irreducible
cyclotomic polynomial factors of $f$, which we shall call the {\em
cyclotomic part} of $f$. Algorithms for finding the cyclotomic
part of $f$, using essentially the same ideas, were proposed in
Bradford and Davenport \cite{Bradford-Davenport} and Beukers and
Smyth \cite{Beukers-Smyth}. They are based on the following
properties of roots of unity.
\begin{lemma}[Beukers and Smyth \cite{Beukers-Smyth}, Lemma 1]
\begin{itemize}
\item[{\rm (i)}] If $g\in \C[X]$, $g(0)\neq 0$, is a polynomial
with the  property that for every zero $\alpha$ of $g$, at least
one of $\pm \alpha^2$ is also a zero, then all zeroes of $g$ are
roots of unity. \item[{\rm (ii)}] If $\omega$ is a root of unity,
then it is conjugate to $\omega^p$ where
\bea \left\{\begin{array}{ll} p=2k+1\,,\;\; \omega^p=-\omega &
\mbox{for}\; \omega\, \mbox{a primitive}\; (4k)
\mbox{th root of unity}\,;  \\
p=k+2\,,\;\; \omega^p=-\omega^2 & \mbox{for}\; \omega\, \mbox{a
primitive}\; (2k)
\mbox{th root of unity}\,,\;k\;\mbox{odd}\,;  \\
p=2\,,\;\; \omega^p=\omega^2 & \mbox{for}\; \omega\, \mbox{a}\; k
\mbox{th root of unity}\,,\;k\;\mbox{odd}\,.  \\
\end{array}\right .
\eea
\end{itemize}
\label{basic_properties}
\end{lemma}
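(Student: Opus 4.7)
The plan for part (i) is a pigeonhole argument exploiting the finiteness of the zero set of $g$. Starting from any zero $\alpha$ of $g$, the hypothesis lets us inductively build a sequence $\alpha_0=\alpha,\alpha_1,\alpha_2,\ldots$ of zeros of $g$ by choosing $\alpha_{k+1}\in\{+\alpha_k^2,-\alpha_k^2\}$ to be one that is again a zero. A straightforward induction yields $\alpha_k=\varepsilon_k\alpha^{2^k}$ for some sign $\varepsilon_k\in\{\pm 1\}$. Since $g$ has only finitely many zeros, the sequence must repeat: $\alpha_m=\alpha_n$ for some $m<n$. Combined with $\alpha\ne 0$ (ensured by $g(0)\ne 0$), this forces $\alpha^{2^n-2^m}=\pm 1$, and therefore $\alpha^{2(2^n-2^m)}=1$, so $\alpha$ is a root of unity.

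For part (ii), the natural approach is through the Galois theory of cyclotomic extensions: if $\omega$ has exact order $N$, then $\omega^p$ is a $\Q$-conjugate of $\omega$ precisely when $\gcd(p,N)=1$, the conjugacy being realised by the automorphism $\sigma_p\colon\omega\mapsto\omega^p$ of $\Q(\omega)$. Hence for each of the three listed cases the task splits into (a) verifying that the prescribed exponent $p$ is coprime to the order $N$ of $\omega$, and (b) reducing $\omega^p$ using the defining relations to the claimed form $\pm\omega^i$.

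Concretely: in the primitive $(4k)$-th root case with $p=2k+1$, the exponent is odd (hence coprime to $4$) and $\gcd(2k+1,k)=\gcd(1,k)=1$, so $\gcd(p,4k)=1$; moreover $\omega^{2k}=-1$, being the unique square root of $1$ other than $1$ in a cyclic group of order $4k$, and therefore $\omega^p=-\omega$. In the primitive $(2k)$-th root case with $k$ odd and $p=k+2$, parity gives $\gcd(k+2,2)=1$ while $\gcd(k+2,k)=\gcd(2,k)=1$, so $\gcd(p,2k)=1$; and $\omega^k=-1$ immediately produces $\omega^p=-\omega^2$. Finally, if $\omega$ is a primitive $k$-th root with $k$ odd, then $p=2$ is trivially coprime to $k$ and no reduction is required.

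There is no serious obstacle in either part; the only step that demands a bit of care is arranging the induction in (i) so that the unknown signs $\varepsilon_k$ telescope into a single global $\pm 1$. That collapse is exactly what allows the pigeonhole conclusion $\alpha_m=\alpha_n$ to produce a nonzero integer power of $\alpha$ equal to $\pm 1$, which is the crux of the argument.
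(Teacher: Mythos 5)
The paper states this lemma without proof, citing Beukers and Smyth (their Lemma 1), so there is no in-paper proof to compare against. Your argument is nevertheless correct and is essentially the standard one: the pigeonhole-on-the-finite-zero-set argument in (i), with the observation that squaring kills the accumulated signs so that $\alpha_k=\varepsilon_k\alpha^{2^k}$, is exactly the device that makes the iteration terminate in a relation $\alpha^{2(2^n-2^m)}=1$; and part (ii) is the routine Galois-theoretic computation checking $\gcd(p,N)=1$ together with $\omega^{2k}=-1$ (resp.\ $\omega^k=-1$) in the respective cases, which is complete and accurate as you wrote it.
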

%

In the special case $f\in\Z[X]$, Filaseta and Schinzel
\cite{Filaseta-Schinzel} constructed a deterministic algorithm for
finding the cyclotomic part of $f$ that works especially well when
the number of nonzero terms is small compared to the degree of
$f$.

\subsection{Torsion points on plane curves}

Let $f\in \C[X^{\pm 1}, Y^{\pm 1}]$ be a Laurent polynomial. The
problem of finding torsion points on the curve ${\mathcal C}$
defined by the polynomial equation $f(X,Y)=0$  has been addressed
in  Beukers and Smyth \cite{Beukers-Smyth} and Ruppert
\cite{Ruppert}. The polynomial $f$ can be written in the form
\bea f(X,Y)=g(X,Y)\prod_{i} (X^{a_{i}} Y^{b_{i}}-\omega_i)\,, \eea
where the $\omega_j$ are roots of unity and $g$ is a polynomial
(possibly reducible) that has no factor of the form $X^{a}
Y^{b}-\omega$, for $\omega$ a root of unity.
\begin{lemma}[Beukers and Smyth \cite{Beukers-Smyth}, Main Theorem]
The curve ${\mathcal C}$ has at most $22\,{\rm vol}_2(g)$ isolated
torsion points. \label{BS_main}
\end{lemma}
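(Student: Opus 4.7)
My plan is to reduce to counting torsion zeros of the ``essential'' factor $g$, and then to bound those by intersecting $V(g)$ with a suitable ``twist'' $V(g\circ\phi)$ coming from the Galois action on cyclotomic fields.

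First I would note that each factor $X^{a_i}Y^{b_i}-\omega_i$ cuts out a one--dimensional torsion coset lying inside $\mathcal{C}$, so any torsion point on one of these components is contained in a positive--dimensional torsion coset and is not isolated. Consequently every isolated torsion point of $\mathcal{C}$ is a torsion zero of $g$, and it suffices to bound the number of torsion zeros of $g$.

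Next, given a torsion point $P=(\omega_1,\omega_2)\in V(g)$, Lemma \ref{basic_properties}(ii) supplies for each coordinate a small exponent $p\in\{2,\,k+2,\,2k+1\}$ and a sign such that $\omega_i^p$ is a Galois conjugate of $\omega_i$ up to that sign. Choosing via the Chinese remainder theorem (applied to the orders of $\omega_1$ and $\omega_2$) a single Galois automorphism that simultaneously realises a compatible pair of such exponents yields a monomial substitution $\phi(X,Y)=(\epsilon_1 X^{r_1},\epsilon_2 Y^{r_2})$, with $r_i\in\{1,2\}$ and $\epsilon_i\in\{\pm 1\}$, such that $\phi(P)$ is a Galois conjugate of $P$ and therefore again lies on $V(g)$ (after a descent step that replaces $g$ by a companion polynomial whose coefficients sit in a number field on which the Galois action is defined). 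Thus $P\in V(g)\cap V(g\circ\phi)$ for one of only finitely many choices of $\phi$. I would then apply Bernstein's mixed--volume theorem to each pair $(g,g\circ\phi)$: since $g\circ\phi$ has Newton polytope obtained from that of $g$ by a dilation of factor at most $2$ followed by a monomial translation, the relevant mixed volume is bounded by a universal constant times ${\rm vol}_2(g)$, and summing over the substitutions yields a bound of the asserted shape $22\,{\rm vol}_2(g)$.

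The main obstacle is to verify that $g$ and $g\circ\phi$ share no common factor, so that $V(g)\cap V(g\circ\phi)$ is zero--dimensional and the Bernstein count genuinely bounds isolated intersection points. This is precisely where the hypothesis on $g$ is indispensable: any common factor would have to be (up to a scalar) invariant under the monomial substitution $\phi$, and a short irreducibility argument should force such an invariant polynomial to be of the excluded cyclotomic form $X^aY^b-\omega$. Making this forbidden--factor step precise, handling the descent from $\C$ to a number field in which Galois acts, and pruning the list of substitutions tightly enough to produce the explicit constant $22$, is the principal technical work; the rest reduces to a routine mixed--volume computation.
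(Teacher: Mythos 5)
The paper does not reprove this lemma: it cites the Main Theorem of Beukers and Smyth. The nearest in-paper analogue is Theorem~\ref{second_polynomial} together with its proof in Section~5, which is the $n$-dimensional generalisation of exactly the intersection device you describe. Your outline reconstructs the right strategy (strip off the cyclotomic factors, use Lemma~\ref{basic_properties}(ii) to manufacture a companion polynomial vanishing at every torsion zero of $g$, check coprimality, and finish with a Newton-polygon / mixed-volume count), so at that level it is faithful to Beukers--Smyth and to the paper's own higher-dimensional argument.

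Two points, though, are not quite right and the second affects the constant. First, the coordinatewise Chinese remainder step does not work as stated: the orders of $\omega_1$ and $\omega_2$ need not be coprime, so CRT does not give you a single automorphism realising independently chosen exponents. The correct move---which is what Beukers--Smyth do, and what the paper does in Section~5 by writing the torsion point as $(\omega^{i_1},\ldots,\omega^{i_n})$ with $\gcd(i_1,\ldots,i_n)=1$---is to note that $\langle\omega_1,\omega_2\rangle$ is cyclic, set $(\omega_1,\omega_2)=(\omega^{i_1},\omega^{i_2})$ for one primitive root of unity $\omega$, and apply Lemma~\ref{basic_properties}(ii) to that single $\omega$. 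Second, and as a consequence, the exponent is forced to be the \emph{same} in both coordinates: the Galois map $\omega\mapsto\omega^{p}$ sends $(\omega^{i_1},\omega^{i_2})$ to $(\epsilon_1\omega_1^{r},\epsilon_2\omega_2^{r})$ with one $r\in\{1,2\}$, not independent $r_1,r_2$. Your list $\phi(X,Y)=(\epsilon_1X^{r_1},\epsilon_2Y^{r_2})$ with $r_1\neq r_2$ permitted is strictly larger than what occurs; totting up the mixed volumes over that list gives roughly $46\,\mathrm{vol}_2(g)$ rather than $22\,\mathrm{vol}_2(g)$. Restricting to $r_1=r_2$, the three sign substitutions contribute $2\,\mathrm{vol}_2(g)$ each and the four squaring substitutions $4\,\mathrm{vol}_2(g)$ each, giving $3\cdot2+4\cdot4=22$, which is where the constant actually comes from and is the ``routine'' computation you deferred. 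With those repairs (and the $\Q^{\mathrm{ab}}$ descent you already flag), the sketch is a correct account of the Beukers--Smyth proof.
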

%
%
Hence, for $f\in \C[X, Y]$, the number of isolated torsion points
on the curve ${\mathcal C}=\Hy(f)$ is at most $11\,(\deg(f))^2$.
Furthermore, by Lemma \ref{coset_via_system} below, each factor
$X^{a_{i}} Y^{b_{i}}-\omega_i$ of the polynomial $f$ gives
precisely one torsion coset. Summarizing the above observations,
we get the inequality
\be \Nc({\mathcal C})\le 11(\deg(f))^2+\deg(f)\,.
\label{2D_estimates}\ee

\subsection{Lattices and torsion cosets}
%
We recall some basic definitions. A {\em lattice} is a discrete
subgroup of $\R^n$. Given a lattice $L$ of rank $k$, any set of
vectors $\{{\ve b}_1,\ldots,{\ve b}_k\}$ with $L={\rm
span}_{\Z}\{{\ve b}_1,\ldots,{\ve b}_k\}$ or the matrix ${\bf
B}=({\ve b}_1,\ldots,{\ve b}_k)$ with rows ${\ve b}_i$ will be
called a {\em basis} of $L$.  The {\em determinant} of a lattice
$L$ with a basis ${\bf B}$ is defined to be
\bea  \det (L) = \sqrt{\det({\bf B}\,{\bf B}^T)}\,. \eea
%
%
By an {\em integer} lattice we understand a lattice
$A\subset\Z^n$. An integer lattice is called {\em primitive} if
$A=\spn_{\R}(A)\cap \Z^n$. For an integer lattice $A$, we define
the subgroup $H_A$ of $\Gm^n$ by
\bea H_A=\{{\ve x}\in\Gm^n: {\ve x}^{\ve a}=1\;\; {\rm for\;
all}\;\; {\ve a}\in A\}\,. \eea
Then, for instance, $H_{\Z^n}$ is the trivial subgroup.
%
%
\begin{lemma}[See Schmidt \cite{Schmidt}, Lemmas 1 and 2]
The map $A\mapsto H_A$ sets up a bijection between integer
lattices and algebraic subgroups of $\Gm^n$. A subgroup $H=H_A$ is
irreducible if and only if the lattice $A$ is primitive.
\label{lattices_cosets}
\end{lemma}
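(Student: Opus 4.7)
The plan is to exploit the identification of the coordinate ring $\C[\Gm^n]=\C[X_1^{\pm 1},\ldots,X_n^{\pm 1}]$ with the group algebra $\C[\Z^n]$, in which $\ve a\in\Z^n$ corresponds to the Laurent monomial $\ve X^{\ve a}$. I would first verify that $H_A$ really is an algebraic subgroup: it is closed under multiplication and inversion because $(\ve x\ve y)^{\ve a}=\ve x^{\ve a}\ve y^{\ve a}$, and it is Zariski closed as the common vanishing set of the finitely many Laurent polynomials $\ve X^{\ve a}-1$, with $\ve a$ ranging over a basis of $A$. For the inverse direction, given an algebraic subgroup $H\subset\Gm^n$, I would associate the sublattice
\[
A_H=\{\ve a\in\Z^n:\ve x^{\ve a}=1\text{ for every }\ve x\in H\}
\]
and then show that $A\mapsto H_A$ and $H\mapsto A_H$ are mutually inverse.

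The inclusion $A\subset A_{H_A}$ is immediate; for the reverse I would apply the Smith normal form to choose a $\Z$-basis $\ve e_1',\ldots,\ve e_n'$ of $\Z^n$ relative to which $A$ is spanned by $d_1\ve e_1',\ldots,d_k\ve e_k'$ for positive integers $d_i$. In the coordinates $Y_i=\ve X^{\ve e_i'}$, the subgroup $H_A$ becomes the explicit locus cut out by $Y_1^{d_1}=\cdots=Y_k^{d_k}=1$ with $Y_{k+1},\ldots,Y_n$ free in $\Gm$. Writing $\ve a$ in this new basis as $\sum a_i'\ve e_i'$, one checks directly that if $\ve a\notin A$ — so either some $a_j'\neq 0$ for $j>k$, or some $d_j\nmid a_j'$ for $j\le k$ — then an appropriate choice of $\ve x\in H_A$ with a transcendental coordinate in the first case or a primitive $d_j$-th root of unity in the second produces $\ve x^{\ve a}\neq 1$. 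This step is routine case analysis.

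The harder half is $H_{A_H}=H$; the inclusion $H\subset H_{A_H}$ is trivial, so the real content is the reverse inclusion. Here I would exploit the diagonalizability of $\Gm^n$: the Hopf algebra $\C[\Gm^n]=\C[\Z^n]$ decomposes as a direct sum of one-dimensional character spaces $\C\cdot\ve X^{\ve a}$. The defining ideal $I(H)$ is stable under the translation action of $H$ on $\C[\Gm^n]$, and because this action is semisimple, $I(H)$ is the sum of its intersections with the character eigenspaces. A character argument then forces $I(H)$ to be generated by differences $\ve X^{\ve a}-\ve X^{\ve b}$ with $\ve a-\ve b\in A_H$, and this is precisely the defining ideal of $H_{A_H}$. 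This identification of Hopf ideals in $\C[\Z^n]$ with sublattices of $\Z^n$ is the main obstacle and the heart of the proof; everything else is essentially bookkeeping around it.

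For the irreducibility criterion, the preceding identification yields $\C[H_A]\cong\C[\Z^n]/I_A\cong\C[\Z^n/A]$. By the structure theorem for finitely generated abelian groups, $\Z^n/A\cong\Z^{n-k}\oplus T$ with $T$ finite, and its group algebra splits as
\[
\C[\Z^n/A]\cong\C[Y_1^{\pm 1},\ldots,Y_{n-k}^{\pm 1}]\otimes_{\C}\C[T].
\]
Since $\C$ contains all roots of unity, the Fourier decomposition gives $\C[T]\cong\C^{|T|}$, so $\C[H_A]$ is a product of $|T|$ integral domains of Krull dimension $n-k$. Therefore $H_A$ has exactly $|T|$ irreducible components, all isomorphic to $\Gm^{n-k}$; it is geometrically irreducible if and only if $T=0$, i.e.\ if and only if $\Z^n/A$ is torsion-free, which is precisely the condition that $A$ be primitive.
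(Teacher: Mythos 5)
The paper itself does not prove this lemma: it records the statement and refers to Lemmas 1 and 2 of Schmidt \cite{Schmidt} for the argument, so there is no internal proof to compare against. Your proposal is a correct, self-contained proof, and it is essentially the standard classification of closed subgroups of the diagonalizable group $\Gm^n$. The Smith normal form computation for $A=A_{H_A}$ is routine and correct. The substantive direction $H_{A_H}\subseteq H$ is rightly reduced to the fact that an $H$-stable subspace of $\C[\Z^n]$ decomposes along the simultaneous eigenlines $\C\cdot\ve X^{\ve a}$ of the (commuting, monomially diagonalized) translation operators, together with the observation that any $\sum_i c_i\ve X^{\ve a_i}\in I(H)$ supported in a single $H$-character class must have $\sum_i c_i=0$, hence is a combination of differences $\ve X^{\ve a_i}-\ve X^{\ve a_j}$ with $\ve a_i-\ve a_j\in A_H$. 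One step you should make explicit is that this places $I(H)$ inside the ideal $I_{A_H}$ generated by $\{\ve X^{\ve a}-1:\ve a\in A_H\}$, so $H_{A_H}=V(I_{A_H})\subseteq V(I(H))=H$; that containment is what converts the character decomposition into the desired equality. The irreducibility criterion via $\C[H_A]\cong\C[Y_1^{\pm 1},\ldots,Y_{n-k}^{\pm 1}]\otimes_{\C}\C[T]$ with $\C[T]\cong\C^{|T|}$ is also correct: $H_A$ has exactly $|T|$ irreducible components, each a translate of the identity component $\cong\Gm^{n-k}$, and $T=0$ is precisely the primitivity of $A$. (Minor: in the case analysis for $A=A_{H_A}$, a coordinate of infinite multiplicative order suffices; transcendence is more than you need.)
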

%
%
%
 Let ${\ve \omega}=(\omega_1,\ldots,\omega_n)$ be a torsion point and let $C={\ve \omega}H_A$
 be an $r$-dimensional torsion coset with $r\ge 1$.
We will need the following parametric representation of $C$. Let
$\spn_{\R}^\bot(A)$ denote the orthogonal complement of
$\spn_{\R}(A)$ in $\R^n$ and let ${\bf G}=(g_{ij})$ be an
$r\times n$ integer matrix of rank $r$ whose rows ${\ve
g}_1,\ldots,{\ve g}_r$ form a basis of the lattice
$\spn_{\R}^\bot(A)\cap\Z^n$. Then the coset $C$ can be represented
in the form
\bea C=\left(\omega_1\prod_{j=1}^r
t_j^{g_{j1}},\ldots,\omega_n\prod_{j=1}^r
t_j^{g_{jn}}\right)\,\eea
with parameters $t_1, \ldots, t_r\in \C^*$.
We will say that ${\bf G}$ is an {\em exponent matrix} for
the coset $C$.
If $f\in \C[X_1^{\pm 1},\ldots,X_n^{\pm 1}]$ is a Laurent
polynomial and for ${\ve j}\in \Z^r$
\bea f_{\ve j}({\ve X})=\sum_{{\ve i}\in S_f:{\ve i}{\bf
G}^T={\ve j}} a_{{\ve i}}{\ve X}^{\ve i}\,,\eea
then $f({\ve X})=\sum_{{\ve j}\in\Z^r} f_{\ve j}({\ve X})$ and
\be\label{E-fj=0} \text{ the coset } C \text{ lies on } \Hy(f)
\text{ if and only if } f_{\ve j}({\ve \omega})=0 \text{ for all }
{\ve j}\in\Z^r.\ee

Let ${\bf U}=({\ve u}_1, {\ve u}_2,\ldots, {\ve u}_n)$ be a
basis of the lattice $\Z^n$. We will associate with ${\bf U}$
the new coordinates $(Y_1,\ldots,Y_n)$ in $\Gm^n$ defined by
\be Y_1={\ve X}^{{\ve u}_1},\;\; Y_2={\ve X}^{{\ve u}_2},
\ldots,\;\; Y_{n}={\ve X}^{{\ve u}_{n}}\,.
\label{associated_coordinates}\ee
Suppose that the matrix ${\bf U}^{-1}$ has rows ${\ve v}_1,
{\ve v}_2,\ldots, {\ve v}_n$.
%
%
%
%
By the {\em image} of a Laurent polynomial $f\in \C[X_1^{\pm
1},\ldots,X_n^{\pm 1}]$ in coordinates $(Y_1,\ldots,Y_n)$ we mean
the Laurent polynomial
\bea f^{\bf U}({\ve Y})=f({\ve Y}^{{\ve v}_1},\ldots,{\ve
Y}^{{\ve v}_n})\,. \eea
By the {\em image} of a torsion coset  $C={\ve \omega}H_A$ in
coordinates $(Y_1,\ldots,Y_n)$ we mean the torsion coset
\bea C^{\bf U}=({\ve \omega}^{{\ve u}_1},\ldots,{\ve
\omega}^{{\ve u}_n})H_B\,,\eea
where $B=\{{\ve a}{\bf U}^{-1}: {\ve a}\in A\}$.
%
\begin{lemma}
The map $C\mapsto C^{\bf U}$ sets up a bijection between
maximal torsion cosets on the subvarieties ${\mathcal
V}(f_1,\ldots,f_t)$ and ${\mathcal V}(f_1^{\bf
U},\ldots,f_t^{\bf U})$. \label{lattices_cosets}
\end{lemma}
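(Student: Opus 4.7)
The plan is to recognize that the coordinate change associated to $\mathbf{U}$ is nothing other than an algebraic group automorphism of $\Gm^n$, and to show that both the formation of images $f \mapsto f^{\mathbf{U}}$ and the formation of images $C \mapsto C^{\mathbf{U}}$ are exactly the action of this automorphism on varieties and on cosets respectively. Once this is established, the bijection statement is formal.

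More precisely, I would first set $\phi_{\mathbf{U}}: \Gm^n \to \Gm^n$, $\ve x \mapsto (\ve x^{\ve u_1},\ldots,\ve x^{\ve u_n})$. Because $\mathbf{U}$ is a basis of $\Z^n$, it is unimodular over $\Z$, and the rule $\ve x \mapsto (\ve x^{\ve v_1}, \ldots, \ve x^{\ve v_n})$ associated to $\mathbf{U}^{-1}$ supplies a two-sided inverse, so $\phi_{\mathbf{U}}$ is an isomorphism of algebraic groups. Moreover, a direct computation shows $\phi_{\mathbf{U}}(\ve x)^{\ve b} = \ve x^{\ve b \mathbf{U}}$, and this identity is the source of every subsequent claim.

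Next I would verify the two compatibility statements. On the level of polynomials, the definition $f^{\mathbf{U}}(\ve Y) = f(\ve Y^{\ve v_1}, \ldots, \ve Y^{\ve v_n})$ says precisely that $f^{\mathbf{U}} = f \circ \phi_{\mathbf{U}}^{-1}$, so
\begin{equation*}
\V(f_1^{\mathbf{U}}, \ldots, f_t^{\mathbf{U}}) \;=\; \phi_{\mathbf{U}}\bigl(\V(f_1, \ldots, f_t)\bigr).
\end{equation*}
On the level of cosets, using the identity above with the defining relations of $H_A$, one checks that $\phi_{\mathbf{U}}(H_A) = H_B$ with $B = \{\ve a \mathbf{U}^{-1} : \ve a \in A\}$, and then $\phi_{\mathbf{U}}(\ve \omega H_A) = (\ve \omega^{\ve u_1}, \ldots, \ve \omega^{\ve u_n}) H_B = C^{\mathbf{U}}$. (The lattice $B$ is automatically primitive when $A$ is, by Lemma~\ref{lattices_cosets}, so $C^{\mathbf{U}}$ is a torsion coset of the same dimension.)

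With these two identifications, the lemma is immediate: $\phi_{\mathbf{U}}$ is a bijection of $\Gm^n$ that sends torsion points to torsion points and subtori to subtori, hence torsion cosets bijectively to torsion cosets; it preserves set-theoretic inclusions; and by the compatibility above it sends cosets inside $\V(f_1,\ldots,f_t)$ exactly to cosets inside $\V(f_1^{\mathbf{U}},\ldots,f_t^{\mathbf{U}})$. Maximality is preserved because it is defined purely in terms of inclusion among torsion cosets inside the ambient variety, and both are transported by $\phi_{\mathbf{U}}$. There is no genuine obstacle here; the only point requiring care is the bookkeeping on row-versus-column conventions in the identity $\phi_{\mathbf{U}}(\ve x)^{\ve b} = \ve x^{\ve b \mathbf{U}}$, which underlies every step.
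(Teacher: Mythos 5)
Your proof is correct and is exactly the paper's own approach, only spelled out in more detail: the paper simply observes that $\phi_{\mathbf{U}}$ is an automorphism of $\Gm^n$ (a monoidal transformation, citing Bombieri--Gubler and Schmidt) and lets the reader deduce the bijection, whereas you make the two compatibility identities $\V(f_i^{\mathbf{U}}) = \phi_{\mathbf{U}}(\V(f_i))$ and $\phi_{\mathbf{U}}(\ve\omega H_A) = C^{\mathbf{U}}$ explicit. No gap; your write-up is just a fuller version of the same argument.
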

\begin{proof}
It is enough to observe that the map $\phi: \Gm^n\rightarrow
\Gm^n$ defined by
\be \phi({\ve x})=({\ve x}^{{\ve u}_1},\ldots, {\ve x}^{{\ve
u}_{n}})\, \label{auto} \ee
is an automorphism of $\Gm^n$ (see Ch. 3 in Bombieri and Gubler
\cite{Bombieri-Gubler} and Section 2 in Schmidt \cite{Schmidt}).
\end{proof}

\noindent{\bf Remark.} The automorphism (\ref{auto}) is called a
{\em monoidal transformation}. We introduced the coordinates
(\ref{associated_coordinates}) to make the inductive argument used
in the proofs of Theorems
\ref{just_one_polynomial}--\ref{Polynomial_bound} more
transparent.

For $f\in\C[X_1,\ldots,X_n]$ and $k\ge n$, we will denote by
$T^k_i(f)$ the number of $i$-dimensional maximal torsion cosets on
$\Hy(f)$, regarded as a hypersurface in  $\Gm^k$.
Let $A\subset \Z^n$ be an integer lattice of rank $n$ with $\det
(A) > 1$ and let
${\bf A}=({\ve a}_1,\ldots,{\ve a}_n)$ be a basis of $A$.
\begin{lemma}
Suppose that the Laurent polynomials $f, f^* \in \C[X_1^{\pm
1},\ldots,X_n^{\pm 1}]$ satisfy
\be f=f^*({\ve X}^{{\ve a}_1},\ldots,{\ve X}^{{\ve a}_n})\,.
\label{plug_in}\ee
Then the inequalities
\be T^n_i(f^*)\le T^n_i(f)\le \det(A)\,
T^n_i(f^*)\,,\;\;\;i=0,\ldots,n-1\, \label{left-right}\ee
hold. \label{sublattice}
\end{lemma}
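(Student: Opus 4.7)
The key object is the isogeny
\[
\phi: \Gm^n \longrightarrow \Gm^n, \qquad \phi({\ve x}) = ({\ve x}^{{\ve a}_1}, \ldots, {\ve x}^{{\ve a}_n}),
\]
which, because ${\bf A}$ is non-singular, is a surjective group homomorphism with finite kernel $H_A$ of order $\det(A)$. The hypothesis (\ref{plug_in}) reads $f = f^* \circ \phi$, so $\Hy(f) = \phi^{-1}(\Hy(f^*))$. Being a group homomorphism with finite kernel, $\phi$ sends torsion points to torsion points and subtori to subtori of the same dimension; moreover, I will need the structural fact that the preimage $\phi^{-1}(C^*)$ of any torsion coset $C^*$ of dimension $d$ is a disjoint union of at most $\det(A)$ torsion cosets, each of dimension $d$. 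This is obtained by decomposing the closed subgroup $\phi^{-1}(H^*)$ (where $H^*$ is the subtorus of which $C^*$ is a coset) into cosets of its identity component, itself a subtorus of dimension $d$ by the finite-kernel property, and noting that the component group is a quotient of $\ker\phi = H_A$.

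For the lower bound $T^n_i(f^*)\le T^n_i(f)$ I would start with a maximal $i$-dimensional torsion coset $C^*\subset \Hy(f^*)$ and pick any irreducible component $C$ of $\phi^{-1}(C^*)$. By the first paragraph $C$ is an $i$-dimensional torsion coset contained in $\Hy(f)$. To verify that $C$ is maximal in $\Hy(f)$, I suppose $C\subseteq C'$ for some torsion coset $C'\subset\Hy(f)$; then $\phi(C')$ is a torsion coset of $\Hy(f^*)$ containing $C^*$, so maximality of $C^*$ forces $\phi(C')=C^*$, and since $\phi$ has finite kernel this gives $\dim C' = \dim C^* = \dim C$, whence irreducibility forces $C'=C$. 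Distinct $C^*$'s have disjoint preimages, so the assignment $C^*\mapsto C$ is injective.

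For the upper bound $T^n_i(f)\le \det(A)\, T^n_i(f^*)$, I would send each maximal $i$-dimensional torsion coset $C\subset\Hy(f)$ to the torsion coset $C^*:=\phi(C)\subset\Hy(f^*)$, again of dimension $i$. The coset $C^*$ is maximal in $\Hy(f^*)$: otherwise, picking a torsion coset $\widetilde{C}^*\supsetneq C^*$ in $\Hy(f^*)$, the irreducible component of $\phi^{-1}(\widetilde{C}^*)\subset\Hy(f)$ containing $C$ would be a torsion coset of dimension $\dim\widetilde{C}^*>i$, contradicting the maximality of $C$. The fibre of $C\mapsto\phi(C)$ over a fixed $C^*$ is contained in the set of components of $\phi^{-1}(C^*)$ and hence has at most $\det(A)$ elements, finishing the proof. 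The only substantive ingredient is the structural claim of the first paragraph, which ultimately rests on the bijection of Lemma \ref{lattices_cosets} between algebraic subgroups of $\Gm^n$ and integer lattices; the remainder of the argument is a straightforward chase of images and preimages under $\phi$.
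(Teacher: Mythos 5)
Your proof is correct and takes a more conceptual route than the paper's. The paper first reduces to the case of a diagonal matrix ${\bf A}$ via Smith Normal Form, handles the zero-dimensional case by an explicit count of preimages of a torsion point (obtaining in fact the equality $T^n_0(f)=\det(A)\,T^n_0(f^*)$), and then for positive-dimensional cosets constructs an explicit map $\tau:M_f\to M_{f^*}$ using exponent matrices and lattice bases, checking by hand that $\tau$ is well-defined (i.e.\ independent of the choice of torsion point ${\ve\omega}\in C$), surjective, and has fibres of size at most $\det(A)$. You instead work abstractly with the isogeny $\phi$ and the structure of preimages of subtori, which gives a unified treatment of all dimensions and makes the well-definedness issue disappear, since $\phi(C)$ and the components of $\phi^{-1}(C^*)$ are manifestly choice-free. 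The arithmetic content is the same in both proofs — the finite kernel $H_A$ of order $\det(A)$ controls the multiplicity — but the paper manipulates bases of the relevant lattices explicitly while you package this as the general fact that $\phi^{-1}(H^*)=H_A\cdot H_0$ with $H_0$ the identity component, so the component group is a quotient of $H_A$. Your version is cleaner; the paper's version is more elementary in its ingredients and makes the equality in the $i=0$ case visible.

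One small imprecision to flag: you assert that ``distinct $C^*$'s have disjoint preimages.'' This is not literally true — two distinct maximal torsion cosets on $\Hy(f^*)$ can meet (e.g.\ two one-dimensional cosets crossing at a torsion point), and then $\phi^{-1}(C_1^*)$ and $\phi^{-1}(C_2^*)$ also meet. What actually gives injectivity of your section $C^*\mapsto C$ is that a component $C$ of $\phi^{-1}(C^*)$ maps \emph{onto} $C^*$ (finite morphism, equal dimensions, $C^*$ irreducible), so a single component cannot lie over two different maximal cosets. This is exactly the observation you already use in the upper-bound half, so the fix is just a rewording and does not affect the soundness of the argument.
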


\begin{proof}
%
%
%
%
First, for any torsion point ${\ve
\zeta}=(\zeta_1,\ldots,\zeta_n)$ on $\Hy(f^*)$, we will find all
torsion points ${\ve \omega}$ on $\Hy(f)$ with
${\ve \zeta}=({\ve \omega}^{{\ve a}_1},\ldots,{\ve \omega}^{{\ve
a}_n})$.
%
Putting the matrix ${\bf A}$ into Smith Normal Form (see
Newman \cite{Newman}, p. 26) yields two matrices ${\bf V}$
and ${\bf W}$
in ${\rm GL}_n(\Z)$ with ${\bf{WAV}}={\bf D}$, where
${\bf D}=\mbox{diag}(d_1,\ldots,d_n)$. Therefore, by Lemma
\ref{lattices_cosets}, we may assume without loss of generality
that ${\bf A}=\mbox{diag}(d_1,\ldots,d_n)$.
Let $\vartheta_1,\ldots,\vartheta_n$ be primitive $d_1$st,
$d_2$nd, \ldots, $d_n$th roots of $\zeta_1,\ldots, \zeta_n$,
respectively. Then as we let $\vartheta_1,\ldots,\vartheta_n$ vary
over all possible such choices of these primitive roots
\be \begin{array}{l}\mbox{the torsion point}\; {\ve \zeta}\in
\Hy(f^*)\; \mbox{gives precisely}\; \det(A)\; \mbox{torsion}\\
\mbox{points}\;
 {\ve \omega}=(\vartheta_1,\ldots,\vartheta_n)\;\mbox{on}\; \Hy(f)\;\mbox{with}\;{\ve
\zeta}=({\ve \omega}^{{\ve a}_1},\ldots,{\ve \omega}^{{\ve a}_n})
\,.\end{array}\label{preimage}\ee
Let now $M_{f}$ and $M_{f^*}$ denote the sets of all maximal
torsion cosets of {\em positive} dimension on $\Hy(f)$ and
$\Hy(f^*)$ respectively. We will define a map $\tau:
M_f\rightarrow M_{f^*}$ as follows. Let $C\in M_{f}$ be an
$r$-dimensional maximal torsion coset. Given any torsion point
${\ve \omega}=(\omega_1,\ldots,\omega_n)\in C$, we can write the
coset as $C={\ve \omega}H_B$ for some primitive integer lattice
$B$. Recall that $C$ can be also represented in the form
\be C=\left(\omega_1\prod_{j=1}^r
t_j^{g_{j1}},\ldots,\omega_n\prod_{j=1}^r
t_j^{g_{jn}}\right)\,,\label{param}\ee
where $t_1, \ldots, t_r\in \C^*$ are parameters and the vectors
${\ve g}_j=(g_{j1},\ldots,g_{jn})$, $j=1,\ldots, r$, form a basis
of the lattice $\spn_{\R}^\bot(B)\cap\Z^n$. Let $M=\spn_{\Z}\{{\ve
g}_1{\bf A}^T,\ldots,{\ve g}_r{\bf A}^T\}$ and
$L=\spn_{\R}(M)\cap\Z^n$. Then we define
\bea \tau(C)= \left({\ve \omega}^{{\ve a}_1}\prod_{k=1}^r
t_k^{s_{k1}},\ldots,{\ve \omega}^{{\ve a}_n}\prod_{k=1}^r
t_k^{s_{kn}}\right)\,,\eea
%
where $t_1, \ldots, t_r\in \C^*$ are parameters and the vectors
${\ve s}_k=(s_{k1},\ldots,s_{kn})$, $k=1,\ldots, r$, form a basis
of the lattice $L$.
%
%
Let us show that $\tau$ is well-defined.  First, the observation
(\ref{E-fj=0}) implies that $\tau(C)$ is a maximal $r$-dimensional
torsion coset on $\Hy(f^*)$. Now we have to show that $\tau(C)$
does not depend on the choice of ${\ve \omega}\in C$.
Observe that any torsion point ${\ve \eta}\in C$ has the form
\bea {\ve \eta}=\left (\omega_1 \prod_{j=1}^r \nu_j^{
g_{j1}},\ldots, \omega_n \prod_{j=1}^r \nu_j^{ g_{jn}}\right )
\,,\eea
where $\nu_1,\ldots,\nu_r$ are some roots of unity.
%
Put ${\ve h}_j={\ve g}_j {\bf A}^T$, $j=1,\ldots,r$. It is
enough to show that for any roots of unity $\nu_1,\ldots,\nu_r$
there exist roots of unity $\mu_1,\ldots,\mu_r$ such that
\bea \prod_{j=1}^r \nu_j^{h_{ji}}=\prod_{k=1}^r
\mu_k^{s_{ki}}\,,\;\;i=1,\ldots,n\,.\eea
Since $M\subset L$, we have ${\ve h}_j\in L$, so that
\bea {\ve h}_j=l_{j1}{\ve s}_1+\cdots+l_{jr}{\ve
s}_r\,,\;\;\;l_{j1},\ldots, l_{jr}\in \Z\,.\eea
Now we can put
\bea \mu_k=\nu_1^{l_{1k}}\nu_2^{l_{2k}}\cdots
\nu_r^{l_{rk}}\,,\;\;\;k=1,\ldots,r\,.\eea
Thus, the map $\tau$ is well-defined.  It can be also easily shown
that the map $\tau$ is surjective. This observation immediately
implies the left hand side inequality in (\ref{left-right}) for
positive $i$.
Moreover, by (\ref{preimage}), we clearly have
\be T^n_0(f)= \det(A)\, T^n_0(f^*)\,,
\label{equality_for_isolated_points}\ee
so that the lemma is proved for the isolated torsion points.

Let now $D={\ve \zeta}H'\in M^*$ be an $r$-dimensional maximal
torsion coset. Suppose that $D=\tau(C)$ for some $C\in M_f$.
We will show that $C={\ve \omega}H$, where ${\ve \omega}$ can be
chosen among $\det(A)$ torsion points listed in (\ref{preimage}).
This will immediately imply the right hand side inequality in
(\ref{left-right}) for positive $i$.
We may assume without loss of generality that $H=H_B$ and
$H'=H_{\spn^\bot_{\R}(L)\cap \Z^n}$, with the lattices $B$ and $L$
defined as above.
Let $\mu_1,\ldots,\mu_r$ be any roots of unity. Then the coset $D$
can be represented as
\bea D= \left(\zeta_1\prod_{k=1}^r \mu_k^{s_{k1}}\prod_{k=1}^r
t_k^{s_{k1}},\ldots,\zeta_n\prod_{k=1}^r
\mu_k^{s_{kn}}\prod_{k=1}^r t_k^{s_{kn}}\right)\,\eea
%
for ${\ve \zeta}=(\zeta_1,\ldots,\zeta_n)$.
Thus, it is enough to prove the existence of roots of unity
$\nu_1,\ldots,\nu_r$ with
\bea \prod_{k=1}^r \mu_k^{s_{ki}}=\prod_{j=1}^r
\nu_j^{h_{ji}}\,,\;\;i=1,\ldots,n\,.\eea
The lattice $M$ is a sublattice of $L$ and $\rank(M)=\rank(L)$.
Therefore there exist positive integers $n_1,\ldots,n_r$ such that
$n_i {\ve s}_i\in M$, $i=1,\ldots,r$, and, consequently, we have
\bea n_i {\ve s}_i=m_{i1}{\ve h}_1+\cdots+m_{ir}{\ve
h}_r\,,\;\;\;m_{i1},\ldots,m_{ir}\in \Z\,.\eea
Now, if the roots of unity $\rho_1,\ldots,\rho_r$ satisfy
$\rho_i^{n_i}=\mu_i$, $i=1,\ldots, r$, we can put
\bea \nu_j=\rho_1^{m_{1j}}\rho_2^{m_{2j}}\cdots
\rho_r^{m_{rj}}\,,\;\;j=1,\ldots,r\,.\eea

\end{proof}

\subsection{Torsion cosets of codimension one in $\Gm^n$} The next lemma
allows us to detect the $(n-1)$-dimensional torsion cosets on
hypersurfaces.
\begin{lemma}
Suppose that the hypersurface ${\mathcal H}$ is defined by the
polynomial $f\in \C[X_1,\ldots,X_n]$ with $f=\prod_i h_i$, where
$h_i$ are irreducible polynomials. Then the $(n-1)$-dimensional
torsion cosets on ${\mathcal H}$ are precisely the hypersurfaces
$\Hy(h_j)$ defined by the factors $h_j$ of the form ${\ve X}^{{\ve
m}_j}-\omega_j{\ve X}^{{\ve n}_j}$, where $\omega_j$ are roots of
unity. \label{coset_via_system}\end{lemma}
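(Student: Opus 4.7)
The plan is to prove both inclusions directly, using the dictionary between primitive rank-one integer lattices and codimension-one torsion cosets. The key observation is that a binomial ${\ve X}^{\ve m}-\omega {\ve X}^{\ve n}$ with $\omega$ a root of unity and $\supp({\ve m})\cap\supp({\ve n})=\emptyset$ is irreducible in $\C[X_1,\ldots,X_n]$ if and only if the exponent vector ${\ve a}={\ve m}-{\ve n}$ is primitive in $\Z^n$; this is precisely what matches factors of $f$ of the stated shape with $(n-1)$-dimensional torsion cosets on $\Hy(f)$.

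For the direction ``factors give cosets,'' I would take an irreducible factor $h_j={\ve X}^{{\ve m}_j}-\omega_j {\ve X}^{{\ve n}_j}$ of $f$. Its zero locus in $\Gm^n$ is $\{{\ve x}:{\ve x}^{{\ve a}_j}=\omega_j\}$ with ${\ve a}_j:={\ve m}_j-{\ve n}_j$ primitive by the observation above; picking any torsion point ${\ve \omega}_0$ with ${\ve \omega}_0^{{\ve a}_j}=\omega_j$, which exists precisely because ${\ve a}_j$ is primitive and $\omega_j$ is a root of unity, identifies this zero locus with the $(n-1)$-dimensional torsion coset ${\ve \omega}_0 H_{\Z{\ve a}_j}\subset\Hy$.

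For the converse, let $C\subset\Hy$ be any $(n-1)$-dimensional torsion coset. By Lemma \ref{lattices_cosets} we have $C={\ve \omega}H_A$ for some rank-one primitive integer lattice $A=\Z{\ve a}$. Writing ${\ve a}={\ve m}-{\ve n}$ with non-negative ${\ve m},{\ve n}$ of disjoint support and setting $\omega:={\ve \omega}^{\ve a}$ (a root of unity), the coset $C$ is exactly the zero locus in $\Gm^n$ of the binomial $g:={\ve X}^{\ve m}-\omega{\ve X}^{\ve n}$, which is irreducible by the initial observation. Since $\Hy(g)$ is then an irreducible affine hypersurface and its intersection $C$ with the Zariski-open set $\Gm^n$ is non-empty, $C$ is Zariski-dense in $\Hy(g)$; hence the inclusion $C\subset\Hy(f)$ extends to $\Hy(g)\subset\Hy(f)$, and the Nullstellensatz combined with the irreducibility of $g$ yields $g\mid f$. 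Thus $g$ is a scalar multiple of some $h_j$ of the required binomial form and $C=\Hy(h_j)\cap\Gm^n$.

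The main obstacle---more technical than conceptual---is the irreducibility/primitivity equivalence invoked in the first paragraph. One direction follows from the identity ${\ve X}^{k{\ve b}^+}-\omega {\ve X}^{k{\ve b}^-}=\prod_{\zeta^k=\omega}({\ve X}^{{\ve b}^+}-\zeta {\ve X}^{{\ve b}^-})$, valid whenever the exponent difference has the form $k{\ve b}$ with $k\ge 2$ and ${\ve b}={\ve b}^+-{\ve b}^-$ primitive of disjoint support. The other direction can be handled by a monoidal transformation sending the primitive vector ${\ve a}$ to ${\ve e}_1$, reducing the irreducibility claim in the Laurent polynomial ring to the obvious irreducibility of $Y_1-\omega$, and then noting that a polynomial factorisation of $g$ would force one factor to be a non-unit monomial, which is impossible by disjointness of $\supp({\ve m}),\supp({\ve n})$. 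Everything else in the proof---the lattice/subgroup dictionary and the closure-plus-Nullstellensatz step---is standard.
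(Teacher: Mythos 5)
Your proof is correct, and while the forward direction (factors give cosets) runs essentially along the paper's lines, your converse direction is genuinely different. The paper argues algebraically: given the codimension-one coset $C={\ve \omega}H$ with exponent matrix of rank $n-1$, it decomposes $f=\sum_{{\ve j}}f_{{\ve j}}$ via (\ref{E-fj=0}), observes that each nonzero $f_{{\ve j}}$ has support on a line parallel to a fixed primitive ${\ve a}$, and deduces that the binomial $h_C={\ve X}^{{\ve a}}-{\ve \omega}^{{\ve a}}$ divides every $f_{{\ve j}}$ and hence $f$. You instead argue geometrically: $C$ is the zero set of the binomial $g$ inside $\Gm^n$, $C$ is dense in the irreducible affine hypersurface $\{g=0\}$, and the containment $C\subset\Hy(f)$ then propagates to $g\mid f$ via the Nullstellensatz. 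Both are sound. The paper's route is more elementary and constructive (it only needs factor theory for one-variable polynomials and keeps everything inside the Laurent-polynomial framework), whereas yours is shorter at the cost of invoking a bigger tool. A nice feature of your write-up is that you make explicit the equivalence ``$h_j$ irreducible $\iff$ ${\ve a}_j={\ve m}_j-{\ve n}_j$ primitive,'' with a proof in both directions; the paper simply asserts that one may take ${\ve a}$ primitive without spelling out that this is forced by irreducibility of the factor $h_j$.

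One small point of care worth noting, though it does not affect correctness: in your forward direction you assert that picking any ${\ve \omega}_0$ with ${\ve \omega}_0^{{\ve a}_j}=\omega_j$ ``identifies this zero locus with the torsion coset ${\ve \omega}_0 H_{\Z{\ve a}_j}$.'' That both inclusions hold deserves the one-line check the paper gives (namely that for any solution ${\ve x}$ of ${\ve x}^{{\ve a}}=\omega$ the point ${\ve x}{\ve \omega}_0^{-1}$ lies in $H_{\Z{\ve a}}$); it is immediate, but it is the content of the identification rather than a consequence of the dictionary alone.
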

\begin{proof}
Let $\omega$ be a root of unity and let $h={\ve X}^{{\ve
m}}-\omega{\ve X}^{{\ve n}}$ be a factor of $f$.  Multiplying $h$
by a monomial we may assume that $h$ is a Laurent polynomial of
the form ${\ve X}^{\ve a}-\omega$, where ${\ve
a}=(a_1,\ldots,a_n)$ is a {\em primitive} integer vector, so that
$\gcd(a_1,\ldots,a_n)=1$. Let $A$ be the integer lattice generated
by the vector ${\ve a}$,
${\ve b}=(b_1,\ldots,b_n)$ be an integer vector with
$\langle {\ve b},{\ve a}\rangle=1$\,, where $\langle \cdot,
\cdot\rangle$ is the usual inner product,
and put
\bea {\ve \omega}=(\omega^{b_1},\ldots,\omega^{b_n})\,.\eea
 Now, all points of the torsion coset $C={\ve
\omega}H_A$ clearly satisfy the equation ${\ve X}^{{\ve
a}}=\omega$. To show that any solution ${\ve x}=(x_1,\ldots, x_n)$
of this equation belongs to $C$
we observe that the point
$(x_1\omega^{-b_1},\ldots,x_n\omega^{-b_n})$ belongs to the
subtorus $H_A$. 

Conversely, let $C={\ve \omega}H$ be an $(n-1)$-dimensional coset
on ${\mathcal H}$.  Since the exponent matrix of the coset $C$ has
rank $n-1$, there exists a primitive integer vector ${\ve a}$ such
that and for all ${\ve j}\in\Z^{n-1}$ we have $\spn_{\R}(L(f_{\ve
j}))\cap\Z^n=\spn_{\Z}\{{\ve a}\}$. Since $f_{\ve j}({\ve
\omega})=0$, the Laurent polynomial $h_C={\ve X}^{\ve a}-{\ve
\omega}^{\ve a}$ will divide all $f_{\ve j}$ and, consequently,
$f$. Multiplying by a monomial, we may assume that $h_C$ is a
factor of the desired form. Finally, noting that
$H=H_{\spn_{\Z}\{{\ve a}\}}$ and applying the result of the
previous paragraph, we see that $C=\Hy(h_C)$.

\end{proof}

\subsection{Geometry of numbers}   Let $B_p^n$ with $p=1,2,\infty$
denote the unit $n$-ball with respect to the $l_p$-norm, and let
$\gamma_n$ be the Hermite constant for dimension $n$ -- see
Section 38.1 of Gruber--Lekkerkerker \cite{Gruber-Lekkerkerker}.
%
%
For a convex body $K$ and a lattice $L$, we also denote by
$\lambda_i(K,L)$ the $i$th successive minimum of $K$ with respect
to $L$ -- see Section 9.1 ibid.
\begin{lemma}
Let $S$ be a subspace of $\R^n$ with $\dim(S)={\rm
rank}(S\cap\Z^n)=r<n$. Then there exists a basis $\{{\ve b}_1,
{\ve b}_2,\ldots,{\ve b}_n\}$ of the lattice $\Z^n$ such that
\begin{itemize}

\item[{\rm (i)}] $S\subset\spn_{\R}\{{\ve b}_1,\ldots,{\ve
b}_{n-1}\}$;
\item[{\rm (ii)}] $|{\ve b}_i|< 1+
\frac{1}{2}(n-1)\gamma_{n-1}^{\frac{n-1}{2}}
\gamma_{n-r}^{\frac{1}{2}}
%
\det(S\cap\Z^n)^{\frac{1}{n-r}}$, $i=1,\ldots,n$.
\end{itemize}
%
%
\label{suitable_basis}
\end{lemma}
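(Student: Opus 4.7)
The plan is to construct the basis in three stages: first find a short primitive integer vector $\ve c\in\Z^n$ orthogonal to $S$, then produce a short basis $\ve b_1,\ldots,\ve b_{n-1}$ of the rank-$(n-1)$ hyperplane lattice $\ve c^\perp\cap\Z^n$, and finally adjoin a single short completion vector $\ve b_n$.

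Set $\Lambda:=S\cap\Z^n$. The assumption $\dim S=\rank(S\cap\Z^n)=r$ makes $\Lambda$ a primitive rank-$r$ sublattice of $\Z^n$, so its orthogonal-primitive companion $M:=\spn_\R(\Lambda)^\perp\cap\Z^n$ is itself primitive of rank $n-r$ and satisfies $\det M=\det\Lambda$ by the standard duality for primitive sublattices. Hermite's inequality inside $M$ yields a nonzero $\ve c\in M$ with $|\ve c|\le\gamma_{n-r}^{1/2}\det(\Lambda)^{1/(n-r)}$, and since $M$ is primitive in $\Z^n$, the vector $\ve c$ is primitive in $\Z^n$ as well. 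Put $H:=\ve c^\perp\supset S$ and $\Lambda_H:=H\cap\Z^n$; then $\Lambda_H$ is a primitive rank-$(n-1)$ sublattice of $\Z^n$ with $\det\Lambda_H=|\ve c|$.

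The crucial point for the second stage is that $\Lambda_H\subset\Z^n$ forces $\lambda_1(B_2^{n-1},\Lambda_H)\ge 1$, since every nonzero integer vector has length at least $1$. Combined with Minkowski's second theorem applied to $\Lambda_H$ against the Euclidean unit ball, this lower bound upgrades the standard product inequality on successive minima into an individual upper bound on $\lambda_{n-1}$ of order $\gamma_{n-1}^{(n-1)/2}|\ve c|$. Mahler's basis theorem then delivers a basis $\ve b_1,\ldots,\ve b_{n-1}$ of $\Lambda_H$ with $|\ve b_i|\le\max(1,i/2)\,\lambda_i\le\tfrac{n-1}{2}\lambda_{n-1}$, which meets the stated bound in (ii) (without the leading ``$1$'') for $i<n$.

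For the final vector, primitivity of $\ve c$ supplies some $\ve v\in\Z^n$ with $\langle\ve c,\ve v\rangle=1$, so $\{\ve b_1,\ldots,\ve b_{n-1},\ve v\}$ is already a $\Z$-basis of $\Z^n$. Replacing $\ve v$ by $\ve b_n:=\ve v-\sum_{i<n}m_i\ve b_i$, with $m_i\in\Z$ chosen so that the $H$-projection of $\ve b_n$ lies in the centered fundamental parallelepiped of $\ve b_1,\ldots,\ve b_{n-1}$, gives $|\mathrm{proj}_H(\ve b_n)|\le\tfrac12\sum_{i<n}|\ve b_i|$, while the component of $\ve b_n$ along $\ve c/|\ve c|$ stays fixed at $1/|\ve c|\le 1$. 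Applying $\sqrt{1+x^2}\le 1+x$ folds $\ve b_n$ into the same uniform bound, the leading ``$1$'' in (ii) absorbing the along-$\ve c$ contribution. The main technical obstacle lies in the second stage: converting the Minkowski product inequality into \emph{individual} size bounds on all basis vectors of $\Lambda_H$ with the precise constant $\tfrac12(n-1)\gamma_{n-1}^{(n-1)/2}$ — this is exactly where the lower bound $\lambda_1\ge 1$ inherited from $\Lambda_H\subset\Z^n$ is indispensable, since without it only a product bound on the $\lambda_i$'s is available.
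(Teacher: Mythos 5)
Your construction runs parallel to the paper's: you produce a rational hyperplane $H\supset S$ whose intersection lattice has small determinant, apply Minkowski's second theorem plus $\lambda_1\ge 1$ to get $\lambda_{n-1}\le\gamma_{n-1}^{(n-1)/2}\det(H\cap\Z^n)$, pull a reduced basis of $H\cap\Z^n$ out of Cassels' corollary, and then extend by one vector. Your first stage is a pleasant self-contained substitute for the citation the paper uses: instead of invoking Proposition 1(ii) of Aliev--Schinzel--Schmidt you take $c$ a shortest (hence primitive) vector of $M=\spn_{\R}(\Lambda)^\perp\cap\Z^n$, so Hermite's bound and the duality $\det M=\det\Lambda$ give $\det(H\cap\Z^n)=|c|\le\gamma_{n-r}^{1/2}\det(\Lambda)^{1/(n-r)}$ directly. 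That part is fine and arguably cleaner.

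The gap is in the third stage. You bound the $H$-projection of the completing vector $\ve b_n$ by placing it in the centered fundamental parallelepiped of $\ve b_1,\ldots,\ve b_{n-1}$, which yields $|\mathrm{proj}_H(\ve b_n)|\le\tfrac12\sum_{i<n}|\ve b_i|$. But the $|\ve b_i|$ you have at this point are already enlarged by Cassels' factors $\max(1,i/2)$, so $\tfrac12\sum_{i<n}|\ve b_i|$ is on the order of $\tfrac{(n-1)^2}{4}\lambda_{n-1}$ rather than $\tfrac{n-1}{2}\lambda_{n-1}$. The paper avoids this by bounding the same projection by the inhomogeneous minimum $\mu(H\cap B_2^n,H\cap\Z^n)$ and then invoking Jarnik's inequality $\mu\le\tfrac12\sum_i\lambda_i\le\tfrac{n-1}{2}\lambda_{n-1}$; the crucial point is that Jarnik is stated in terms of the successive minima $\lambda_i$, not the reduced-basis lengths $|\ve b_i|$, and $\lambda_i\le\lambda_{n-1}$ without any extra $i/2$ factor. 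As written, your argument therefore proves a version of (ii) with constant roughly $\tfrac{(n-1)^2}{4}\gamma_{n-1}^{(n-1)/2}\gamma_{n-r}^{1/2}$, which is weaker than claimed by a factor of about $(n-1)/2$. To close the gap, replace the parallelepiped reduction of $\ve v$ by a reduction to within covering-radius distance of a point of $H\cap\Z^n$ and then apply Jarnik's theorem to control that covering radius.
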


\begin{proof}
Suppose first that $r<n-1$. By Proposition 1 (ii) of Aliev,
Schinzel and Schmidt \cite{Aliev-Schinzel-Schmidt}, there exists a
subspace $T\subset \R^n$ with $\dim(T)= n-1$ such that $S\subset
T$ and
\be \det(T\cap\Z^n) \le
\gamma_{n-r}^{\frac{1}{2}}\det(S\cap\Z^n)^{\frac{1}{n-r}}\,.\label{TviaS}\ee
In the case $r=n-1$ we will put $T=S$.

The subspace $T$ can be considered as a standard
$(n-1)$--dimensional euclidean space.
Then by the Minkowski's second theorem for balls (see Theorem I,
Ch. VIII of Cassels \cite{Cassels}) we have
\bea \prod_{i=1}^{n-1}\lambda_i(T\cap B^n_2, T\cap\Z^n)\le
\gamma_{n-1}^{\frac{n-1}{2}} \det(T\cap\Z^n)\,.\eea
Noting that  $1 \le \lambda_1(T\cap B^n_2, T\cap\Z^n)\le\ldots\le
\lambda_{n-1}(T\cap B^n_2, T\cap\Z^n)$, we get
\be \lambda_{n-1}(T\cap B^n_2, T\cap\Z^n)\le
\gamma_{n-1}^{\frac{n-1}{2}}
\det(T\cap\Z^n)\,.\label{lambda_via_T}\ee
Next, by Corollary of Theorem VII, Ch. VIII of Cassels
\cite{Cassels}, there exists a basis ${\bf B}=({\ve
b}_1,\ldots,{\ve b}_{n-1})$ of the lattice $T\cap\Z^n$ with $|{\ve
b}_j| \le \max\{1, j/2\}\lambda_j(T\cap B^n_2, T\cap\Z^n)$,
$j=1,\ldots,n-1$.
Consequently, 
\bea |{\ve b}_i|\le \frac{n-1}{2}\lambda_{n-1}(T\cap
B^{n}_2,T\cap\Z^n)\le \frac{n-1}{2}\gamma_{n-1}^{\frac{n-1}{2}}
\det(T\cap\Z^n) \eea
\bea \le\frac{n-1}{2}\gamma_{n-1}^{\frac{n-1}{2}}
\gamma_{n-r}^{\frac{1}{2}}\det(S\cap\Z^n)^{\frac{1}{n-r}}\,,\;
%
%
i=1,\ldots,n-1\,.\eea
Further, we need to extend ${\bf B}$ to a basis of the
lattice $\Z^n$. Let ${\ve a}$ be a primitive integer vector from
$\spn^{\bot}_{\R}(T\cap\Z^n)$.
Clearly, all possible vectors ${\ve b}$ such that $({\ve
b}_1,\ldots,{\ve b}_{n-1}, {\ve b})$ is a basis of $\Z^n$ form the
set
$\{{\ve x}\in \R^n: \langle{\ve x},{\ve a}\rangle=\pm
1\}\cap\Z^n$,
and this set contains a point ${\ve b}_n$ with
\be |{\ve b}_n|\le \frac{1}{|{\ve a}|}+\mu(T\cap B^{n}_2,
T\cap\Z^n) \,,\label{via_mu}\ee
where $\mu(\cdot, \cdot)$ is the {\em inhomogeneous minimum} --
see Section 13.1 of Gruber--Lekkerkerker
\cite{Gruber-Lekkerkerker}.
By Jarnik's inequality (see Theorem 1 on p. 99 ibid.)
\bea \mu(T\cap B^{n}_2,T\cap\Z^n) \le
\frac{1}{2}\sum_{i=1}^{n-1}\lambda_i(T\cap B^{n}_2,T\cap\Z^n)\le
\frac{n-1}{2}\lambda_{n-1}(T\cap B^{n}_2,T\cap\Z^n)\,.\eea
Consequently, by (\ref{via_mu}), (\ref{lambda_via_T}) and
(\ref{TviaS}), we have
\bea |{\ve b}_n| 
<1+\frac{n-1}{2}\gamma_{n-1}^{\frac{n-1}{2}}
\gamma_{n-r}^{\frac{1}{2}}\det(S\cap\Z^n)^{\frac{1}{n-r}}\,.\eea
%
%
\end{proof}


When $L$ is a lattice on rank $n$, its {\em polar} lattice $L^*$
is defined as
\bea L^*=\{{\ve x}\in \R^n : \langle {\ve x},{\ve
y}\rangle\in\Z\;\;\mbox{for all}\;\; {\ve y}\in L\}\,. \eea
 Given a basis ${\bf B}=({\ve b}_1,\ldots,{\ve b}_n)$ of
$L$, the basis of $L^*$ {\em polar} to ${\bf B}$ is the basis
${\bf B}^*=({\ve b}_1^*,\ldots,{\ve b}_n^*)$ with
\bea \langle {\ve b}_i\,, {\ve b}_j^*\rangle=\delta_{ij}\,,\;\;
i,j=1,\ldots, n\,,\eea
where $\delta_{ij}$ is the Kronecker delta.
\begin{coro}
Let $S$ be a subspace of $\R^n$ with $\dim(S)={\rm
rank}(S\cap\Z^n)=r<n$. Then there exists a basis ${\bf
A}=({\ve a}_1, {\ve a}_2,\ldots,{\ve a}_n)$ of the lattice $\Z^n$
such that ${\ve a}_1\in S^\bot$ and the vectors of the polar basis
${\bf A}^*=({\ve a}^*_1, {\ve a}^*_2,\ldots,{\ve a}^*_n)$
satisfy the inequalities
\be
|{\ve a}^*_i|< 1+\frac{n-1}{2}\gamma_{n-1}^{\frac{n-1}{2}}
\gamma_{n-r}^{\frac{1}{2}}\det(S\cap\Z^n)^{\frac{1}{n-r}}\,,\;\;\;
i=1,\ldots,n\,.
\label{upper_bound_for_inverse_elements} \ee
\label{suitable_matrix}
\end{coro}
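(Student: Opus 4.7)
The plan is to deduce Corollary 2.1 from Lemma 2.5 (\textbf{suitable\_basis}) essentially by passing to the polar basis and permuting its vectors.

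First I apply Lemma 2.5 directly to $S$, obtaining a $\Z$-basis ${\bf B}=({\ve b}_1,\ldots,{\ve b}_n)$ of $\Z^n$ with $S\subset\spn_{\R}\{{\ve b}_1,\ldots,{\ve b}_{n-1}\}$ and
\[
|{\ve b}_i|<1+\tfrac{n-1}{2}\gamma_{n-1}^{(n-1)/2}\gamma_{n-r}^{1/2}\det(S\cap\Z^n)^{1/(n-r)}\qquad(i=1,\ldots,n).
\]
Let ${\bf B}^{*}=({\ve b}_1^{*},\ldots,{\ve b}_n^{*})$ be the polar basis. It is a $\Z$-basis of $\Z^n$ (indeed, $({\bf B}^{*})^{T}={\bf B}^{-1}\in\mathrm{GL}_n(\Z)$ because ${\bf B}\in\mathrm{GL}_n(\Z)$). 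The defining relation $\langle{\ve b}_i,{\ve b}_n^{*}\rangle=0$ for $i=1,\ldots,n-1$ shows that ${\ve b}_n^{*}\perp\spn_{\R}\{{\ve b}_1,\ldots,{\ve b}_{n-1}\}$, and since $S$ is contained in the latter span, ${\ve b}_n^{*}\in S^{\bot}$.

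Next I rearrange ${\bf B}^{*}$ to bring this distinguished vector to the first position: set ${\ve a}_1={\ve b}_n^{*}$ and ${\ve a}_i={\ve b}_{i-1}^{*}$ for $i=2,\ldots,n$. This is still a $\Z$-basis of $\Z^n$, and by construction ${\ve a}_1\in S^{\bot}$. It remains to compute the polar basis ${\bf A}^{*}$. Writing ${\bf A}=P{\bf B}^{*}$ for the corresponding permutation matrix $P$, the relation ${\bf A}({\bf A}^{*})^{T}=I$ combined with $({\bf B}^{*})^{-1}={\bf B}^{T}$ gives
\[
({\bf A}^{*})^{T}={\bf A}^{-1}=({\bf B}^{*})^{-1}P^{-1}={\bf B}^{T}P^{T},
\qquad\text{hence}\qquad{\bf A}^{*}=P{\bf B}.
\]
Thus the rows of ${\bf A}^{*}$ are just a permutation of the rows of ${\bf B}$, and so each $|{\ve a}_i^{*}|$ equals some $|{\ve b}_{j}|$, which is bounded by the same quantity supplied by Lemma 2.5. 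This yields the required inequality (\ref{upper_bound_for_inverse_elements}) and completes the proof.

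The proof is essentially a bookkeeping exercise once Lemma 2.5 is in hand, so there is no substantial obstacle; the only point worth being careful about is the standard but slightly error-prone identification between ${\bf B}^{*}$ and $({\bf B}^{T})^{-1}$, together with keeping track of the permutation when moving the $S^{\bot}$-vector into the first slot. Everything else—existence and norm estimate of ${\bf B}$—is already packaged in the preceding lemma.
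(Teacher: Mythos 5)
Your proof is correct and follows the same route as the paper: apply Lemma \ref{suitable_basis} to $S$, observe that $\ve{b}_n^*$ lies in $S^\perp$, and define $\bf A$ by permuting $\bf{B}^*$ to move that vector to the first slot, whereupon $\bf{A}^*$ is the corresponding permutation of $\bf B$ and inherits its bounds. The only cosmetic difference is the particular permutation chosen (you use a cyclic shift, the paper uses the transposition swapping slots $1$ and $n$), which is immaterial.
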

\begin{proof}
Applying Lemma \ref{suitable_basis} to the subspace $S$ we get a
basis $\{{\ve b}_1, {\ve b}_2,\ldots,{\ve b}_n\}$ of $\Z^n$
satisfying conditions (i)--(ii). Observe that its polar basis
$\{{\ve b}^*_1, {\ve b}^*_2,\ldots,{\ve b}^*_n\}$ has its last
vector ${\ve b}^*_n$ in $S^\bot$. Therefore, we can put ${\ve
a}_1={\ve b}^*_n, {\ve a}_2={\ve b}^*_{2},\ldots,{\ve
a}_{n-1}={\ve b}^*_{n-1}, {\ve a}_{n}={\ve b}_1^*$.
\end{proof}

\section{Proof of Theorem \ref{just_one_polynomial}}

%

The lemmas of the next two subsections will allow us to assume
that $L(f)=\Z^n$.

\subsection{$L(f)$ of rank less than $n$}\label{less}

\begin{lemma}
Let $f\in \C[X_1,\ldots,X_n]$, $n\ge 2$, be a 
polynomial of (total) degree $d$. Suppose that $L(f)$ has rank $r$
less
than $n$. Then there exists a 
polynomial $f^*\in
\C[X_1,\ldots,X_{r}]$ of degree at most $d$ such that $L(f^*)$
also has rank $r$ and
\be T^n_i(f)\le T^r_{i-n+r}(f^*)\,,\;\;\;i=n-r,\ldots,n-1\,.
\label{Cosets_on_f_star}\ee
\label{New_polynomial_with_full_lattice}
\end{lemma}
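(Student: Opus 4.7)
The plan is to exploit the fact that $\Hy(f)$ is invariant under translation by the codimension-$r$ subtorus $H_{L'}\subset\Gm^n$ attached to the primitive closure $L'=\spn_{\R}(L(f))\cap\Z^n$ of $L(f)$. Fix any $\ve i_0\in S_f$; then for every $\ve i\in S_f$ the exponent difference $\ve i-\ve i_0$ lies in $L(f)\subset L'$, so $\ve x^{\ve i-\ve i_0}=1$ for all $\ve x\in H_{L'}$, giving $f(\ve x\ve y)=\ve x^{\ve i_0}f(\ve y)$. Hence $\Hy(f)$ is a union of $H_{L'}$-cosets, and by maximality every maximal torsion coset of positive dimension on $\Hy(f)$ must contain $H_{L'}$, which has dimension $n-r$ by Lemma \ref{lattices_cosets}.

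Next, because $L'$ is primitive of rank $r$, any basis $\ve u_1,\dots,\ve u_r$ of $L'$ extends to a basis $\ve u_1,\dots,\ve u_n$ of $\Z^n$. Applying the monoidal transformation from Lemma \ref{lattices_cosets} with this basis gives a polynomial $f^{\bf U}$ whose support has $L(f^{\bf U})\subset\spn_{\Z}\{\ve e_1,\dots,\ve e_r\}$, so $f^{\bf U}(\ve Y)=\ve Y^{\ve m}f^*(Y_1,\dots,Y_r)$ for some monomial $\ve Y^{\ve m}$ (absorbing the common exponents in the last $n-r$ coordinates together with a shift that clears negative powers) and some polynomial $f^*\in\C[Y_1,\dots,Y_r]$. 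Every positive-dimensional maximal torsion coset on $\Hy(f^{\bf U})\subset\Gm^n$ contains the kernel subtorus $\{Y_1=\dots=Y_r=1\}$ and thus equals the preimage under $(Y_1,\dots,Y_n)\mapsto(Y_1,\dots,Y_r)$ of a maximal torsion coset on $\Hy(f^*)\subset\Gm^r$ whose dimension is $n-r$ smaller. Combined with the bijection of Lemma \ref{lattices_cosets} between maximal torsion cosets on $\Hy(f)$ and $\Hy(f^{\bf U})$, this yields $T^n_i(f)\le T^r_{i-n+r}(f^*)$ for $i\ge n-r$ (with equality, in fact), and the lattice rank of $L(f^*)$ equals $r$ because $\{\ve u_j\}$ induces a $\Z$-linear isomorphism $L'\cong\Z^r$ carrying $L(f)$ onto $L(f^*)$.

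The principal obstacle is the degree bound $\deg f^*\le d$: it is not satisfied for an arbitrary basis of $L'$ (e.g.\ replacing $\ve u_j$ by $\ve u_j+N\ve u_k$ with large $N$ inflates $\deg f^*$ without bound), so the basis must be chosen with care. I would invoke Lemma \ref{suitable_basis} applied to the subspace $S=\spn_{\R}(L')$ to obtain a basis of $\Z^n$ whose first $r$ vectors span $L'$ and have controlled norms, and then verify the bound by writing each $\ve i-\ve i_0$ for $\ve i\in S_f$ in this basis as $(c_1(\ve i),\dots,c_r(\ve i))\in\Z^r$ and using the constraint $S_f\subset\{\ve x\in\Z^n_{\ge 0}:x_1+\cdots+x_n\le d\}$ to conclude that the quantity $\max_{\ve i}\sum_j c_j(\ve i)-\sum_j\min_{\ve i'}c_j(\ve i')$, which is exactly $\deg f^*$, does not exceed $d$.
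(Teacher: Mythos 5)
Your first two paragraphs are sound and give a clean geometric picture: $\Hy(f)$ is a cylinder over a hypersurface in $\Gm^r$, invariant under the subtorus $H_{L'}$, and the monoidal transformation identifies maximal torsion cosets on $\Hy(f)$ of dimension $i$ with maximal torsion cosets on $\Hy(f^*)\subset\Gm^r$ of dimension $i-(n-r)$; in fact you obtain equality in (\ref{Cosets_on_f_star}). The rank statement for $L(f^*)$ is also correct.

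The genuine gap is the degree bound $\deg f^*\le d$. You rightly flag this as the delicate point, but the route you propose cannot close it: the bound fails for \emph{every} basis of $L'$, not just for badly chosen ones, so Lemma \ref{suitable_basis} offers no remedy. Take $f=X_1^d+X_2^d+X_3^d$, so $n=3$, $r=2$, and $L'=\{(x,y,z)\in\Z^3:x+y+z=0\}$. With ${\ve i}_0=d{\ve e}_3$ the vectors ${\ve i}-{\ve i}_0$ for ${\ve i}\in S_f$ are ${\ve 0}$, $d({\ve e}_1-{\ve e}_3)$ and $d({\ve e}_2-{\ve e}_3)$, and the latter two are $d$ times a basis of $L'$. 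In coordinates with respect to any basis $({\ve u}_1,{\ve u}_2)$ of $L'$ these become ${\ve 0}$ together with $d$ times the two rows of some matrix in ${\rm GL}_2(\Z)$, and every such triple has $l_1$-diameter at least $2$ (if both rows had $l_1$-norm $1$ they would be $\pm{\ve e}_1,\pm{\ve e}_2$, and two such vectors either coincide up to sign or differ by $2$ in $l_1$). Hence $\deg f^*=2d$ regardless of basis, so your concluding assertion that $\max_{\ve i}\sum_j c_j({\ve i})-\sum_j\min_{{\ve i}'}c_j({\ve i}')\le d$ is false. Lemma \ref{suitable_basis} controls Euclidean lengths by $\det(S\cap\Z^n)$, which has no bearing on the $l_1$-diameter of the image of $S_f$ in $c$-coordinates; at best it would give a bound polynomial in $n$ and $d$, not $\le d$.

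The paper's proof gets $\deg f^*\le d$ for free precisely because it never parametrizes $L'$. After a monomial shift so that $S_f\subset L(f)$, it picks an integer vector ${\ve s}\perp L(f)$ with $s_n\ne 0$ and notes that a unipotent monoidal change of variables turns $f$ into $f(X_1,\ldots,X_{n-1},1)$: setting a variable equal to $1$ is a specialization, which cannot increase total degree, and the coordinate projection is injective on $L(f)$ since $s_n\ne 0$, so the rank $r$ is preserved. Iterating $n-r$ times, permuting coordinates when necessary, yields $f^*\in\C[X_1,\ldots,X_r]$ of degree at most $d$. In your framework, the point is that the paper realizes $\Gm^n/H_{L'}\cong\Gm^r$ by a coordinate projection adapted to the simplex $\{{\ve x}\ge 0:x_1+\cdots+x_n\le d\}$ containing $S_f$, rather than by a basis of the lattice $L'$.
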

\begin{proof}
Multiplying $f$ by a monomial, we will assume without loss of
generality that $S_f\subset L(f)$. Then there exists an integer
vector ${\ve s}=(s_1,\ldots,s_n)\in \spn_{\R}^\bot(S_f)$ and we
may assume that $s_n\neq 0$. Consider the integer lattice
$A\subset \Z^n$ with the basis
\bea {\bf A}=\left (\begin{array}{lllll} 1&
0&\ldots&0&s_1\\0& 1&\ldots&0&s_2\\ \vdots & \vdots &&\vdots&\vdots\\
0&0&\ldots&1&s_{n-1}\\0&0&\ldots&0&s_n
\end{array}\right )\,. \eea
Observe that
\bea f(X_1,\ldots,X_{n-1},1)=f({\ve X}^{{\ve a}_1},\ldots,{\ve
X}^{{\ve a}_n})\,,
\eea
and, by Lemma \ref{sublattice}, we have
\bea T^n_i(f)\le
T^{n-1}_{i-1}(f(X_1,\ldots,X_{n-1},1))\,,\;\;\;i=1,\ldots,n-1\,.\eea
Applying the same procedure to the polynomial
$f(X_1,\ldots,X_{n-1},1)$ and so on, we will remove $n-r$
variables and get the desired polynomial $f^*$.

\end{proof}

\subsection{$L(f)$ of rank $n$, $L(f)\varsubsetneq\Z^n$}

\begin{lemma}
Let $f\in \C[X_1,\ldots,X_n]$, $n\ge 2$, be an irreducible
polynomial of degree $d$. Suppose that $L(f)$ has rank $n$ and
$L(f)\varsubsetneq\Z^n$. Then there exists an irreducible
polynomial $f^*\in \C[X_1,\ldots,X_n]$ of degree at most
$c_1(n,d)=n^2(n+1)! d$ such that $L(f^*)=\Z^n$ and
\be T^n_0(f)=\det(L(f))T^n_0(f^*)\,,
\label{delicate_step_isolated}\ee
\be T^n_i(f)\le \det(L(f)) T^n_i(f^*)\,,\;\;\;i=1,\ldots,n-1\,.
\label{delicate_step_higher}\ee
\label{New_polynomial_with_Z^n}
\end{lemma}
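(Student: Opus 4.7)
The strategy is to invoke Lemma~\ref{sublattice} in reverse: pick a basis $\{{\ve a}_1,\ldots,{\ve a}_n\}$ of $L(f)$ and produce $f^*$ with $f({\ve X})={\ve X}^{{\ve i}_0}f^*({\ve X}^{{\ve a}_1},\ldots,{\ve X}^{{\ve a}_n})$. Then (\ref{delicate_step_isolated}) and (\ref{delicate_step_higher}) will follow immediately from Lemma~\ref{sublattice} applied with $A=L(f)$ (together with (\ref{equality_for_isolated_points}) for the isolated case). The real content is therefore to arrange (a)~$\deg f^*\le c_1(n,d)$, (b)~$L(f^*)=\Z^n$, and (c)~irreducibility of $f^*$.

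After multiplying $f$ by ${\ve X}^{-{\ve i}_0}$ for some fixed ${\ve i}_0\in S_f$ (which alters neither $\Hy(f)\subset\Gm^n$ nor $L(f)$), we may assume $S_f\subset L(f)$. The key geometric input is that $L(f)\subset\Z^n$ forces $\Z^n\subset L(f)^*$, so ${\ve e}_1,\ldots,{\ve e}_n\in L(f)^*$ witness $\lambda_n(B_2^n,L(f)^*)\le 1$. By the same Corollary of Theorem~VII of Cassels (Ch.~VIII) already invoked in the proof of Lemma~\ref{suitable_basis}, $L(f)^*$ admits a basis $\{{\ve a}_1^*,\ldots,{\ve a}_n^*\}$ with $|{\ve a}_k^*|\le \max\{1,k/2\}\le n/2$. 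Let $\{{\ve a}_1,\ldots,{\ve a}_n\}\subset L(f)=L(f)^{**}$ be its polar basis and ${\bf A}$ the matrix with rows ${\ve a}_k$; the columns of ${\bf A}^{-1}$ are then precisely the short vectors ${\ve a}_k^*$.

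Define $g({\ve Y})=\sum_{{\ve i}\in S_f}a_{\ve i}{\ve Y}^{{\ve i}{\bf A}^{-1}}$; the exponents ${\ve i}{\bf A}^{-1}=(\langle{\ve i},{\ve a}_1^*\rangle,\ldots,\langle{\ve i},{\ve a}_n^*\rangle)$ are integral because $S_f\subset L(f)$, and by construction $g({\ve X}^{{\ve a}_1},\ldots,{\ve X}^{{\ve a}_n})=f({\ve X})$. Put $j_{0,k}=\min_{{\ve i}\in S_f}\langle{\ve i},{\ve a}_k^*\rangle$ and $f^*({\ve Y})={\ve Y}^{-{\ve j}_0}g({\ve Y})$; this is a genuine polynomial with $Y_k\nmid f^*$ for every $k$. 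For each $k$, the range of the $k$th exponent of $f^*$ is at most $\max_{{\ve i},{\ve i}'\in S_f}\langle{\ve i}-{\ve i}',{\ve a}_k^*\rangle\le 2d\cdot n/2=dn$ (using $|{\ve i}|\le d$ on the degree-$d$ simplex), whence $\deg f^*\le n\cdot dn=dn^2$, comfortably below $c_1(n,d)=n^2(n+1)!d$.

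Finally, $L(f^*)=L(g)$ is the $\Z$-span of $D(S_f){\bf A}^{-1}=L(f){\bf A}^{-1}$, which equals $\Z^n$ because ${\ve a}_k{\bf A}^{-1}={\ve e}_k$. For irreducibility, the substitution $Y_k\mapsto{\ve X}^{{\ve a}_k}$ is an injective ring homomorphism of Laurent rings (${\bf A}$ being non-singular), so a non-trivial polynomial factorization $f^*=h_1h_2$---necessarily into non-monomial factors, since $Y_k\nmid f^*$---would yield a non-trivial Laurent factorization of $f$, contradicting its irreducibility. The main obstacle is the degree bound: it relies crucially on taking ${\bf A}$ through a short basis of the \emph{polar} lattice $L(f)^*$, since a direct short basis of $L(f)$ itself can have successive minima growing with $\det L(f)$.
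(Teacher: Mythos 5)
Your proof is correct and, for the crucial degree estimate, genuinely shorter than the one in the paper. Both arguments share the same skeleton: multiply $f$ by a monomial to put $S_f\subset L(f)$, pick a basis $\{{\ve a}_k\}$ of $L(f)$ whose polar basis $\{{\ve a}_k^*\}$ is short, write the exponents of $f^*$ as $\langle{\ve i},{\ve a}_k^*\rangle$, and then read off (\ref{delicate_step_isolated})--(\ref{delicate_step_higher}) from Lemma~\ref{sublattice} and (\ref{equality_for_isolated_points}). The difference is how you certify the existence of a short basis of $L^*(f)$. The paper goes through the transference inequality $\lambda_i(B_1^n,L(f))\,\lambda_{n+1-i}(B_\infty^n,L^*(f))\le n!$, combined with $\lambda_i(B_1^n,L(f))\ge 1$, to deduce $\lambda_n(B_\infty^n,L^*(f))\le n!$ and hence a basis with $\|{\ve a}_j^*\|_\infty\le n\,n!/2$. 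You instead observe that $L(f)\subset\Z^n$ already gives $\Z^n\subset L^*(f)$, so the standard unit vectors witness $\lambda_n(B_2^n,L^*(f))\le 1$ and the Cassels corollary immediately yields a basis with $|{\ve a}_k^*|\le n/2$. This bypasses the transference theorem entirely, is more elementary, and gives the sharper bound $\deg f^*\le dn^2$ in place of $n^2(n+1)!\,d$ --- more than enough for the stated $c_1(n,d)$. The integrality and $L(f^*)=\Z^n$ checks, as well as the irreducibility argument via the injective Laurent substitution $Y_k\mapsto{\ve X}^{{\ve a}_k}$, all match the paper's reasoning.

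One stylistic nit: when you bound $\langle{\ve i}-{\ve i}',{\ve a}_k^*\rangle$ you parenthetically justify $\|{\ve i}-{\ve i}'\|_1\le 2d$ by ``$|{\ve i}|\le d$ on the degree-$d$ simplex,'' but after the monomial shift $S_f$ no longer sits in that simplex; what is invariant (and what you are really using) is $D(S_f)\subset 2dB_1^n$. Worth rephrasing, though it does not affect the correctness of the estimate.
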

\begin{proof} Since $S_f\subset d B_1^n$, we have $D(S_f)\subset d
D(B_1^n)=2d B_1^n$. 
Thus, multiplying $f$ by a monomial, we may assume that $f$ is a
Laurent polynomial with $S_f\subset L(f)\cap 2d B_1^n$.
Let $L^*(f)$ be the polar lattice for the lattice $L(f)$ and let
${\bf A}^*=({\ve a}^*_1,\ldots,{\ve a}^*_n)$
be a basis of $L^*(f)$.
Consider the map $\psi: L(f)\rightarrow \Z^n$ defined by
\bea \psi({\ve u}) = (\langle{\ve u},{{\ve a}^*_1}\rangle, \ldots,
\langle{\ve u},{{\ve a}^*_n}\rangle)\,. \eea
The Laurent polynomial
\bea f^*({\ve X})=\sum_{{\ve u}\in \,S_f}a_{\ve u}{\ve
X}^{\psi({\ve u})}\,\eea
has $L(f^*)=\Z^n$.
Observe that we have
\be f=f^*({\ve X}^{{\ve a}_1},\ldots,{\ve X}^{{\ve a}_n})\,.
\label{plug_in}\ee
Therefore the polynomial $f^*$ is irreducible and, by Lemma
\ref{sublattice}, the inequalities (\ref{delicate_step_higher})
hold.  Note also that the equality (\ref{delicate_step_isolated})
follows from (\ref{equality_for_isolated_points}).

Let us estimate the size of $S_{f^*}$.
Recall that  $B_\infty^n$ is the {\em polar reciprocal body} of
$B_1^n$ -- see Theorem III of Ch. IV in Cassels \cite{Cassels}.
%
%
Thus, by Theorem VI of Ch. VIII ibid., we have
\bea \lambda_i(B_1^n, L(f))\lambda_{n+1-i}(B_\infty^n, L^*(f))\le
n!\,. \eea
Noting that $\lambda_i(B_1^n, L(f))\ge 1$, we get the inequality
\be \lambda_{n}(B_\infty^n, L^*(f))\le n!\,.
\label{lambda_star}\ee
Next, by Corollary of Theorem VII, Ch. VIII of Cassels
\cite{Cassels}, there exists a basis ${\bf A}^*=({\ve
a}^*_1,\ldots,{\ve a}^*_n)$ of the lattice $L^*(f)$ such that
\be {\ve a}^*_j\in \max\{1, j/2\} \lambda_j (B_\infty^n, L^*(f))
B_\infty^n \,.\label{a_star}\ee
Combining the inequalities (\ref{lambda_star}) and (\ref{a_star})
we get the bound
\bea ||{\ve a}^*_j||_\infty \le \frac{n\cdot n!}{2}\,.\eea
Then, by the definition of the Laurent polynomial $f^*$, we have
\bea S_{f^*}\subset (\max_{1\le j \le n}||{\ve a}^*_j||_\infty)
2nd B_1^n\subset n^2 n!d B_1^n\,.\eea
Thus, multiplying $f^*$ by a monomial, we may assume that $f^*\in
\C[X_1,\ldots,X_n]$ and
\bea \deg(f^*)\le n^2(n+1)! d=c_1(n,d)\,.\eea

\end{proof}

\subsection{The case $L(f)=\Z^n$}

Let
\bea T(i,n,d)=\max_{\begin{array}{l} \scriptstyle
f\in\C[X_1,\ldots,X_n]\\ \scriptstyle\deg f\le d
\end{array}}T^n_i(f)\,,\;\;\;i=0,\ldots, n-1\, \eea
be the maximum number of maximal torsion $i$-dimensional cosets
lying on a subvariety of $\Gm^n$ defined by a  polynomial of
degree at most $d$.
\begin{lemma} Let $f\in\C[X_1,\ldots,X_n]$, $n\ge 2$, be an irreducible polynomial of degree at most $d$ with
$L(f)=\Z^n$. Then
\be \begin{array}{l}T^n_0(f)\le
(2^{n+1}-1)(T(0,n-1,c_2(n,d))\sum_{s=1}^{n-2}T(s,n-1,2d^2)\\+d
T(0,n-1,2d^2))\,,
\end{array}\label{recurrent_points}\ee
\be \begin{array}{l}T^n_1(f)\le
  (2^{n+1}-1)\,(T(1,n-1,c_2(n,d))
 \sum_{s=1}^{n-2}
T(s,n-1,2d^2)\\ + T(0,n-1,2d^2))\,,
\end{array}\label{recurrent_1_cosets} \ee
\be \begin{array}{l}T^n_i(f)\le (2^{n+1}-1)\,T(i,n-1,c_2(n,d))
 \sum_{s=i-1}^{n-2}
T(s,n-1,2d^2) \,,
\\i=2,\ldots,n-2\,,
\end{array}\label{recurrent_cosets} \ee
\be T^n_{n-1}(f)\le 1\,, \label{large_cosets}\ee
where $c_2(n,d)=n(n+1)d+2(n-1)(n^2-1)n! d^3$.
\label{main_bound}\end{lemma}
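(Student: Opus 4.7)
The argument is a double reduction to $(n-1)$-dimensional subproblems, combining Theorem~\ref{second_polynomial} with a projection/monoidal-transformation step controlled by Corollary~\ref{suitable_matrix}. Since $f$ is irreducible with $L(f)=\Z^n$, Theorem~\ref{second_polynomial} supplies $m\le 2^{n+1}-1$ polynomials $f_1,\ldots,f_m$ with $\deg(f_j)\le 2d$, $\gcd(f,f_j)=1$, and such that every torsion coset of $\Hy(f)$ lies on some $\Hy(f_j)$. This accounts for the prefactor $2^{n+1}-1$ in every estimate and reduces the problem to bounding max torsion cosets of $\Hy(f)$ in $Y_j:=\Hy(f)\cap\Hy(f_j)$ for a fixed $j$. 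Because $\gcd(f,f_j)=1$, the irreducible components of $Y_j$ all have dimension exactly $n-2$, with total Bezout degree at most $\deg(f)\deg(f_j)\le 2d^2$. The case $i=n-1$ follows immediately from Lemma~\ref{coset_via_system} and the observation that an irreducible polynomial can possess at most one binomial factor ${\ve X}^{\ve m}-\omega{\ve X}^{\ve n}$.

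For $i\le n-2$, I associate to each max $i$-dimensional torsion coset $C$ of $\Hy(f)$ lying in $Y_j$ the irreducible component $W$ of $Y_j$ containing $C$, and the smallest torsion coset $D={\ve \omega}H_A$ of $\Gm^n$ containing $W$. A standard degree estimate shows that $\det(A)$ is controlled by $\deg(W)\le 2d^2$. Applying Corollary~\ref{suitable_matrix} to $\spn_{\R}(A)$ yields a $\Z$-basis of $\Z^n$ whose polar basis vectors satisfy $|{\ve a}_i^*|\le 1+\tfrac{n-1}{2}\gamma_{n-1}^{(n-1)/2}\gamma_{n-r}^{1/2}\det(A)^{1/(n-r)}$, and the associated monoidal transformation (Lemma~\ref{lattices_cosets}) identifies $D$ with a standard coordinate coset $\cong\Gm^{n-1}$, sending $W$ to a hypersurface $\tilde W\subset\Gm^{n-1}$ of degree at most $2d^2$ and sending $\Hy(f)\cap D$ to a hypersurface of degree at most $c_2(n,d)=n(n+1)d+2(n-1)(n^2-1)n!d^3$; the cubic-in-$d$ term arises precisely as the product of $\deg(f)=d$ with the $O(d^2)$ polar-basis-norm bound. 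The number of possibilities for the outer datum $\tilde W$ of a given dimension $s$ is then bounded by $T(s,n-1,2d^2)$, and for each such $D$ the max $i$-dimensional torsion cosets of $\Hy(f)\cap D$ are bounded by $T(i,n-1,c_2(n,d))$. Summing over the range $i-1\le s\le n-2$ forced by the dimension constraints $\dim C=i$ and $\dim D\in\{n-2,n-1,n\}$ produces (\ref{recurrent_cosets}) for $i\ge 2$.

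The special $s=0$ corrections in (\ref{recurrent_points}) and (\ref{recurrent_1_cosets}) handle the degenerate case in which the outer max torsion coset $\tilde D$ is an isolated torsion point and its lift to $\Gm^n$ is a vertical copy of $\Gm$: intersecting with $\Hy(f)$ yields either at most $d$ isolated torsion points (contributing $d\,T(0,n-1,2d^2)$ to $T^n_0(f)$) or, when the vertical line is itself contained in $\Hy(f)$, a single $1$-dimensional vertical torsion coset (contributing $T(0,n-1,2d^2)$ to $T^n_1(f)$). The main technical obstacle is deriving the degree bound $c_2(n,d)$ uniformly across the possible ranks $r$ of $A$: this requires combining the Minkowski-style estimate of Corollary~\ref{suitable_matrix} (which contributes the $n!$ factor) with the Bezout-controlled bound $\det(A)\le 2d^2$, keeping track of the exponent $1/(n-r)$ in the polar-basis norm and the resulting exponent $1$ in the degree blow-up, so that a single polynomial expression in $d$ works in each of the cases $\dim D\in\{n-2,n-1,n\}$.
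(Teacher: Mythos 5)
Your high-level outline is in the right spirit: the prefactor $2^{n+1}-1$ does come from Theorem~\ref{second_polynomial}, the degree constant $c_2(n,d)$ does arise from combining Corollary~\ref{suitable_matrix} (via $\gamma_k^{k/2}\le k!$) with a Bezout-type degree bound, the monoidal transformation is the mechanism for reducing to $\Gm^{n-1}$, and the special terms in (\ref{recurrent_points})--(\ref{recurrent_1_cosets}) come from the degenerate case $s=0$. But the central device of the paper's proof is missing, and what you substitute for it doesn't produce the stated counts. The paper forms, for each $k$, the resultant $g_k=\mathrm{Res}(f,f_k,X_n)\in\C[X_1,\ldots,X_{n-1}]$, a nonzero polynomial of degree at most $2d^2$; it then projects each maximal $i$-dimensional torsion coset $C$ on $\Hy(f)$ to $\pi(C)\subset\Gm^{n-1}$, observes $\pi(C)\subset\Hy(g_k)$ with $\dim\pi(C)\in\{i,i-1\}$, and parametrizes the count by the $s$-dimensional \emph{maximal torsion cosets $D$ on $\Hy(g_k)$} that can contain $\pi(C)$. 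It is this parametrization — $D$ ranging over torsion cosets on the single degree-$\le 2d^2$ hypersurface $\Hy(g_k)$ — that produces the factor $\sum_{s}T(s,n-1,2d^2)$, and the inner factor $T(i,n-1,c_2(n,d))$ then bounds, per $D$, the cosets $C$ with $\pi(C)\subset D$ via the monoidal change of coordinates that straightens $D$ to $\{Y_1=\omega\}$.

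In your version the "outer datum" is an irreducible component $W$ of $Y_j=\Hy(f)\cap\Hy(f_j)$ together with the smallest torsion coset $D\supset W$, and you claim the number of such data of dimension $s$ is bounded by $T(s,n-1,2d^2)$. This does not follow: $T(s,n-1,2d^2)$ counts maximal torsion cosets on a fixed hypersurface of $\Gm^{n-1}$, not codimension-2 components of an intersection in $\Gm^n$, and your $\tilde W$ always has dimension $n-2$ rather than ranging over $s$. Moreover a generic component $W$ need not lie in any proper torsion coset, and the "standard degree estimate" you invoke linking $\det(A)$ to $\deg(W)$ is not the mechanism used here — the paper's control on the lattice determinant is $\det(B)\le(\mathrm{diam}\,S_{g_k})^{n-1-s}<(4d^2)^{n-1-s}$, derived from the fact that $\mathrm{span}_{\R}(B)$ is spanned by vectors of the difference set $D(S_{g_k})$ (since $D$ lies on $\Hy(g_k)$), not from the geometric degree of a component. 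Finally, without the resultant you also lose the clean explanation of the $dT(0,n-1,2d^2)$ term: in the paper it is the bound $\deg_{X_n}f\le d$ on the fiber of $\pi$ over an isolated torsion point of $\Hy(g_k)$. The structural fix is to introduce $g_k$ and the projection $\pi$, and to run Corollary~\ref{suitable_matrix} against $\mathrm{span}^{\bot}_{\R}(B)$ for $D=\ve\omega H_B$ a maximal torsion coset on $\Hy(g_k)$.
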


\begin{proof}

By Lemma \ref{coset_via_system},  we immediately get the
inequality (\ref{large_cosets}). Assume now that $\Hy(f)$ contains
no $(n-1)$-dimensional cosets.
Applying Theorem \ref{second_polynomial} to the polynomial $f$, we
obtain $m\le 2^{n+1}-1$ polynomials $f_1,f_2,\ldots, f_m$
satisfying conditions (i)--(iii) of this theorem. For $1\le k\le
m$, put $g_k={\rm Res}(f,f_k,X_n)$. By Theorem
\ref{second_polynomial} (ii), the polynomials $f$ and $f_k$ have
no common factor and thus $g_k\neq 0$.
Recall also that  $g_k$ lies in the elimination ideal $\langle f,
f_k\rangle\cap \C[X_1,\ldots, X_{n-1}]$ and $\deg(g_k)\le
\deg(f)\deg(f_k)\le 2d^2$.

Given a maximal $i$--dimensional torsion coset $C$ on $\Hy(f)$,
$i\le n-2$, its orthogonal projection $\pi(C)$ into the coordinate
subspace corresponding to the indeterminates $X_1,\ldots,X_{n-1}$
is a torsion coset in $\Gm^{n-1}$. Note that the coset $\pi(C)$ is
either $i$ or $i-1$ dimensional.
The proof of inequalities
(\ref{recurrent_points})--(\ref{recurrent_cosets}) is based on the
following observation.

\begin{lemma}
Suppose that $1\le k\le m$, $1\le s\le n-2$ and $0\le i\le s+1$.
Then for any maximal torsion $s$-dimensional coset $D$ on the
hypersurface $\Hy(g_k)$ of $\Gm^{n-1}$, the number of maximal
torsion $i$-dimensional cosets $C$ on $\Hy(f)$ with $\pi(C)\subset
D$ is at most $T(i,n-1,c_2(n,d))$.
\label{cosets_in_the_projection}
\end{lemma}

\begin{proof}
Let $D={\ve \omega}H_B$, where $B$ is a primitive sublattice of
$\Z^{n-1}$ with $\rank(B)=n-1-s$. By Corollary
\ref{suitable_matrix}, applied to the subspace
$\spn^{\bot}_{\R}(B)$, there exists a basis ${\bf A}=({\ve
a}_1, \ldots, {\ve a}_{n-1})$ of the lattice $\Z^{n-1}$ such that
${\ve a}_1\in B$ and its polar basis ${\bf A}^*=({\ve a}^*_1,
\ldots, {\ve a}^*_{n-1})$ satisfies the inequality
(\ref{upper_bound_for_inverse_elements}).
Let $C$ be a maximal torsion $i$-dimensional coset on $\Hy(f)$
with $\pi(C)\subset D$. Observe that the coset $D$ and,
consequently, the coset $C$  satisfy the equation
\be (X_1,\ldots, X_{n-1})^{{\ve a}_1}=\omega
\label{system_of_a_coset}\,,\ee
with the root of unity $\omega={\ve \omega}^{{\ve a}_1}$.
The basis ${\bf A}$ of $\Z^{n-1}$ can be extended to the
basis ${\bf B}=(({\ve a}_1,0), \ldots, ({\ve a}_{n-1},0),
{\ve e}_n)$ of $\Z^{n}$, where $({\ve a}_i,0)$ denotes the vector
$(a_{i1},\ldots,a_{i n-1},0)$ and ${\ve e}_n=(0, \ldots,0, 1)$.
Let $(Y_1,\ldots,Y_n)$ be the coordinates associated with
${\bf B}$.
By Lemma \ref{lattices_cosets}, the coset $C^{\bf B}$ is a
maximal $i$--dimensional torsion coset on $\Hy(f^{\bf B})$
and, by (\ref{system_of_a_coset}), 
it lies on the subvariety of $\Hy(f^{\bf B})$ defined by the
equation
$Y_1=\omega$.
%
Therefore, the orthogonal projection of the coset $C^{\bf B}$
into the coordinate subspace corresponding to the indeterminates
$Y_2,\ldots,Y_{n}$ is a maximal $i$--dimensional torsion coset on
the hypersurface $\Hy(f^{\bf B}(\omega,Y_2\ldots,Y_n))$ of
$\Gm^{n-1}$.
Here the polynomial $f^{\bf B}(\omega,Y_{2},\ldots,Y_n)$ is
not identically zero. Otherwise the $(n-1)$-dimensional coset
defined by (\ref{system_of_a_coset}) would lie on the hypersurface
$\Hy(f)$.

The $(n-1-s)$--dimensional subspace $\spn_{\R}(B)$ is generated by
$n-1-s$ vectors of the difference set $D(S_{g_k})$ (see for
instance the proof of Theorem 8 in \cite{McKee-Smyth} for
details). Therefore,
\bea\det(B)\le (\mbox{diam}(S_{g_k}))^{n-1-s}<
(4d^2)^{n-1-s}\,,\eea
where $\mbox{diam}(\cdot)$ denotes the diameter of the set.
It is well known (see e. g. Bombieri and Vaaler
\cite{Bombieri-Vaaler}, pp. 27--28) that
$\det(B)=\det(\spn^{\bot}_{\R}(B)\cap\Z^{n-1})$.
Hence, by (\ref{upper_bound_for_inverse_elements}), we have
\bea S_{f^{\bf B}}\subset (n\max_{1\le j \le n-1}||{\ve
a}^*_j||_\infty) d B_1^n\varsubsetneq
(nd+2n(n-1)\gamma_{n-1}^{\frac{n-1}{2}}
\gamma_{n-1-s}^{\frac{1}{2}}d^3)B_1^n\,.\eea
Multiplying $f^{\bf B}$ by a monomial,  we may assume that
$f^{\bf B}\in \C[Y_1,\ldots,Y_n]$. Now, observing that
$\gamma_k^{k/2}\le k!$, we get
\bea \deg(f^{\bf B})< c_2(n, d)\,.\eea
 Therefore, we have shown that the maximal torsion coset $D$ can contain
projections of at most $T_i^{n-1}(f^{\bf
B}(\omega,Y_2\ldots,Y_n))\le T(i, n-1, c_2(n,d))$ maximal torsion
$i$-dimensional cosets of $\Hy(f)$.

\end{proof}

By part (iii) of Theorem \ref{second_polynomial}, given a maximal
torsion $i$-dimensional coset $C$ on $\Hy(f)$, its projection
$\pi(C)$ lies on $\Hy(g_k)$ for some $1\le k \le m$.
If $i\ge 2$ then the coset $\pi(C)$ has positive dimension, and
Lemma \ref{cosets_in_the_projection} implies the inequality
(\ref{recurrent_cosets}).
Suppose now that $i\le 1$. Let $C$ be a maximal $i$--dimensional
coset on $\Hy(f)$. The case when $\pi(C)$ lies in a torsion coset
of positive dimension of one of the hypersurfaces $\Hy(g_k)$ is
settled by Lemma \ref{cosets_in_the_projection}. It remains only
to consider the case when $\pi(C)$ is an isolated torsion point.
The number of isolated torsion points ${\ve u}$ on $\Hy(f)$ whose
projection $\pi({\ve u})$ is an isolated torsion point on
$\Hy(g_k)$ is at most $dT^{n-1}_0(g_k)\le dT(0,n-1,2d^2)$. Now,
each isolated torsion point on $\Hy(g_k)$ is the $\pi$--projection
of at most one torsion $1$-dimensional coset on $\Hy(f)$.
These observations together with Lemma
\ref{cosets_in_the_projection} imply the inequalities
(\ref{recurrent_points})--(\ref{recurrent_1_cosets}).

\end{proof}

\subsection{Completion of the proof}

Put
$T(n,d)=\sum_{i=0}^{n-1}T(i,n,d)$.
We will show that for $n\ge 2$
\be T(n,d)\le (2nd)^{n+1} T(n-1, n^{8+4n}d^2)T(n-1,
n^{8+4n}d^3)\,. \label{main_recurrency}\ee
This inequality implies Theorem \ref{just_one_polynomial}. Indeed,
noting that, by (\ref{2D_estimates}), we have $T(2,d)\le 11 d^2+d$
and $\Nc(\Hy(f))\le T(n,d)$, we get from (\ref{main_recurrency})
the inequality (\ref{bound_for_just_one_polynomial}).

Let $f\in \C[X_1,\ldots,X_n]$ be a polynomial of degree $d$.
The lattice $L(f)$ clearly has $n$ linearly independent points in
the difference set $D(S_f)$ and $D(S_f)\subset d D(B_1^n)=2d
B_1^n$. Therefore, by Lemma 8 in Cassels \cite{Cassels}, Ch. V,
the lattice $L(f)$ has a basis lying in $ndB_1^n$. Since
$B_1^n\subset B_2^n$, for each irreducible factor $f'$ of $f$ the
inequality
\bea \det(L(f'))\le (nd)^{n}\,\eea
holds. Then, by Lemmas
\ref{New_polynomial_with_full_lattice}--\ref{main_bound} applied
to all irreducible factors of $f$, we have for all $0\le i\le n-1$
\be \begin{array}{ll}T_i^n(f)\le &d  (2^{n+1}-1)(nd)^{n}\times\\
&\times\,T(i,n-1,c_2(n,c_1(n,d))) T(n-1,2(c_1(n,d))^2)\,.\;\;
\end{array}
\label{huge} \ee
To avoid painstaking estimates we simply observe that for $n\ge 3$
and for all $d$ we have $n^{8+4n}d^2>  2(c_1(n,d))^2$ and
$n^{8+4n}d^3> c_2(n,c_1(n,d))$. Then the inequality (\ref{huge})
implies (\ref{main_recurrency}).

\section{Proof of Theorem \ref{Polynomial_bound}}

\begin{lemma} For $n\ge 2$ the inequality
\be \Nc(n,d)\le T(n,d)\Nc(n-1, n^{2+n}d^2)
\label{formula_v_via_t}\ee holds.
\label{v_via_t}
\end{lemma}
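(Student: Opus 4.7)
The plan is to peel off $f_1$ and partition the maximal torsion cosets of $\mathcal V=\mathcal V(f_1,\dots,f_t)$ according to which maximal torsion coset $C'$ of the hypersurface $\Hy(f_1)$ contains them. There are at most $T(n,d)$ choices of $C'$, and assigning each maximal coset $C$ of $\mathcal V$ to a single such $C'$ (breaking ties arbitrarily) yields
\[
\Nc(\mathcal V)\le\sum_{C'}N(C'),\qquad N(C')=|\{C\subset C':C\text{ maximal on }\mathcal V\}|.
\]
The isolated $C'=\{\ve\omega_0\}$ obviously satisfy $N(C')\le 1$, so it suffices to show $N(C')\le\Nc(n-1,n^{2+n}d^2)$ uniformly for every positive-dimensional $C'$.

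Fix such a $C'=\ve\omega H_A$ with $\dim C'=r\ge 1$ and $A$ a primitive sublattice of $\Z^n$ of rank $n-r$. By the McKee--Smyth argument recalled inside the proof of Lemma~\ref{cosets_in_the_projection}, the subspace $\spn_\R(A)$ is generated by $n-r$ vectors of the difference set $D(S_{f_1})\subset 2dB_2^n$, so
\[
\det(A)=\det\bigl(\spn_\R^\bot(A)\cap\Z^n\bigr)\le(2d)^{n-r},
\]
and in particular $\det(A)^{1/(n-r)}\le 2d$. Apply Corollary~\ref{suitable_matrix} to the $r$-dimensional subspace $S=\spn_\R^\bot(A)$: this yields a basis $\mathbf A=(\ve a_1,\dots,\ve a_n)$ of $\Z^n$ with $\ve a_1\in S^\bot\cap\Z^n=\spn_\R(A)\cap\Z^n=A$ (by primitivity of $A$) whose polar basis $\mathbf A^*=(\ve a_1^*,\dots,\ve a_n^*)$ has all vectors of Euclidean norm at most $1+(n-1)d\,\gamma_{n-1}^{(n-1)/2}\gamma_{n-r}^{1/2}$, which is of order $n^{n+1}d$ after invoking $\gamma_k^{k/2}\le k!$ just as in Lemma~\ref{cosets_in_the_projection}.

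Now change coordinates via the monoidal transformation $Y_i=\boldsymbol X^{\ve a_i}$. Since $\ve a_1\in A$, the relation $\boldsymbol x^{\boldsymbol a}=\ve\omega^{\boldsymbol a}$ on $C'$ forces $Y_1=\ve\omega^{\ve a_1}=:\omega$, a root of unity, on $C'^{\mathbf A}$. The rows of $\mathbf A^{-1}=(\mathbf A^*)^T$ are built from coordinates of the $\ve a_j^*$, so each monomial $\boldsymbol X^{\ve i}$ with $|\ve i|_1\le d$ transforms into a Laurent monomial whose exponent vector has $l_\infty$-norm at most $d\max_j\|\ve a_j^*\|_\infty$; multiplying through by a monomial to clear denominators, a careful estimation shows that the total degree of $f_j^{\mathbf A}$ is at most $n^{2+n}d^2$. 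Substituting $Y_1=\omega$ does not raise this, so the polynomials $\tilde f_j(Y_2,\dots,Y_n)=f_j^{\mathbf A}(\omega,Y_2,\dots,Y_n)\in\C[Y_2,\dots,Y_n]$ have degree at most $n^{2+n}d^2$. By Lemma~\ref{lattices_cosets}, maximal torsion cosets of $\mathcal V$ inside $C'$ correspond bijectively to maximal torsion cosets of $\mathcal V^{\mathbf A}$ inside $C'^{\mathbf A}\subset\{Y_1=\omega\}\cong\Gm^{n-1}$, and any such coset is in particular a maximal torsion coset of the subvariety of $\Gm^{n-1}$ cut out by $\tilde f_1,\dots,\tilde f_t$. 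Hence $N(C')\le\Nc(n-1,n^{2+n}d^2)$, and summing over $C'$ gives the stated inequality.

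The main obstacle is the packaging of the degree bound. One must (i) recognise that the right subspace to feed to Corollary~\ref{suitable_matrix} is $\spn_\R^\bot(A)$, not $\spn_\R(A)$, so that the distinguished basis vector $\ve a_1$ lands in $A$ itself rather than in its orthogonal complement; (ii) combine the McKee--Smyth bound $\det(A)\le(2d)^{n-r}$ coming from the diameter of $S_{f_1}$ with the polar-basis estimate of the Corollary; and (iii) carry these through the monoidal transformation, tracking how norms of polar vectors propagate into row norms of $\mathbf A^{-1}$ and then into a total-degree bound after the single substitution $Y_1=\omega$. Once this bookkeeping is done the clean inductive bound $\Nc(n,d)\le T(n,d)\Nc(n-1,n^{2+n}d^2)$ falls out at once.
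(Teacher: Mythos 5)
Your proof is essentially the paper's proof: you decompose the maximal torsion cosets of $\mathcal V$ according to a containing maximal torsion coset $C'$ of $\Hy(f_1)$ (at most $T(n,d)$ of them); for each positive-dimensional $C'=\ve\omega H_A$ you bound $\det(A)\le(2d)^{n-r}$ via the diameter of $S_{f_1}$, apply Corollary~\ref{suitable_matrix} to $\spn_\R^\bot(A)$ so that $\ve a_1\in A$, pass to the monoidal coordinates and restrict to $Y_1=\omega$, bounding the degree of the restricted system by $n^{2+n}d^2$ via the polar-basis estimate and $\gamma_k^{k/2}\le k!$. This is precisely the paper's route, and your bookkeeping of the determinant and polar-norm bounds reproduces the constants.

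One small but worthwhile divergence: you cut out the reduced subvariety in $\Gm^{n-1}$ with \emph{all} of $\tilde f_1,\dots,\tilde f_t$, whereas the paper uses only $\tilde f_2,\dots,\tilde f_t$. Your choice makes the key step cleaner: if $\pi(D^{\mathbf A})$ sat inside a strictly larger torsion coset $E$ on $\mathcal V(\tilde f_1,\dots,\tilde f_t)$, then the pullback $\{\omega\}\times E$ would satisfy all $f_j^{\mathbf A}=0$ (not merely $j\ge 2$) and hence produce a torsion coset of $\mathcal V$ strictly containing $D$, contradicting maximality of $D$; together with the injectivity of $\pi$ on $\{Y_1=\omega\}$ this gives $N(C')\le\Nc(n-1,n^{2+n}d^2)$ at once. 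With only $\tilde f_2,\dots,\tilde f_t$ one cannot immediately conclude that $\pi(D^{\mathbf A})$ is maximal in the larger variety, so the paper's phrasing requires an extra (implicit) argument; your version avoids that subtlety at no cost, since the degree bound for $\tilde f_1$ is identical. You should, however, state explicitly that the maximality of $\pi(D^{\mathbf A})$ is what needs verifying rather than hiding it in an ``in particular'', and note that the degenerate case $C'\subset\mathcal V$ (where $N(C')=1$) is covered because $\Nc(n-1,\cdot)\ge 1$.
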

\begin{proof}
Suppose that the variety ${\mathcal V}$ is defined by the
polynomials $f=f_1,f_2, \ldots,f_t$.  Then any maximal torsion
coset ${\ve \omega}H$ on ${\mathcal V}$ is contained in a maximal
torsion coset ${\ve \omega}H'$ on the hypersurface $\Hy(f)$.
Now, let $C={\ve \omega}H_A$ with ${\ve
\omega}=(\omega_1,\ldots,\omega_n)$ be a maximal $i$--dimensional
torsion coset on $\Hy(f)$ and suppose $C$ does not lie on
${\mathcal V}$. By Corollary \ref{suitable_matrix}, applied to the
subspace $\spn^{\bot}_{\R}(A)$, there exists a basis ${\bf
A}=({\ve a}_1, {\ve a}_2, \ldots, {\ve a}_{n})$ of the lattice
$\Z^{n}$ such that ${\ve a}_1\in A$ and its polar basis ${\bf
A}^*=({\ve a}^*_1, {\ve a}^*_2, \ldots, {\ve a}^*_{n})$ satisfies
the inequality (\ref{upper_bound_for_inverse_elements}). Let
$(Y_1,\ldots, Y_n)$ be the coordinates associated with the basis
${\bf A}$. By (\ref{associated_coordinates}), the coset
$C^{\bf A}$ lies on the hypersurface of $\Gm^n$ defined by
the equation
\be Y_1=\omega\,, \label{Y_1}\ee
with $\omega={\ve \omega}^{{\ve a}_1}$. Observe that for any
torsion coset ${\ve \zeta}H_B\subset {\ve \omega}H_A$, the lattice
$A$ is a sublattice of the lattice $B$ and ${\ve
\zeta}=(\omega_1x_1,\ldots,\omega_nx_n)$ for some
$(x_1,\ldots,x_n)\in H_A$. Consequently, ${\ve \zeta}H_B$ also
satisfies (\ref{Y_1}).
%
%
Then the number of maximal torsion cosets on ${\mathcal V}$ that
are subcosets of $C$ is at most the number of maximal torsion
cosets on the subvariety of $\Gm^{n-1}$ defined by the 
equations
\bea \begin{array}{l}f_2^{\bf A}(\omega, Y_2,\ldots,Y_n)=0\,,\\
\;\vdots\\ f_t^{\bf A}(\omega,
Y_2,\ldots,Y_n)=0\,.\end{array}\eea
Note that since $C\nsubseteq {\mathcal V}$, not all Laurent
polynomials $f_i^{\bf A}(\omega, Y_2,\ldots,Y_n)$ are
identically zero. 
The $(n-i)$--dimensional subspace $\spn_{\R}(A)$ is spanned by
$n-i$ vectors of the difference set $D(S_f)$. Therefore,
\bea\det(A)\le (\mbox{diam}(S_f))^{n-i}< (2d)^{n-i}\,.\eea
%
%
Note that $\det(A)=\det(\spn^{\bot}_{\R}(A)\cap\Z^n)$.
Hence, by (\ref{upper_bound_for_inverse_elements}), we have
\bea S_{f^{\bf A}_j}\subset d (n\max_{1\le j \le n}||{\ve
a}^*_j||_\infty)  B_1^n \subsetneq
(nd+n(n-1)\gamma_{n-1}^{\frac{n-1}{2}}
\gamma_{n-i}^{\frac{1}{2}}d^2)B_1^n\,\eea
for $j=2,\ldots,t$.  Multiplying the Laurent polynomials
$f_j^{\bf A}$ by a monomial, we may assume that
$f_j^{\bf A}\in \C[Y_2,\ldots,Y_n]$. Noting that
$\gamma_k^{k/2}\le k!$, we get the inequalities
\bea \deg(f^{\bf A}_j)< n(n+1)d+(n-1)(n^2-1)n!
d^2\,,\;\;j=2,\ldots,t\,.\eea
Finally, observe that for $n\ge 2$, $1\le i\le n-1$ and for all
$d$, we have
$$n^{2+n}d^2>n(n+1)d+(n-1)(n^2-1)n! d^2.$$
\end{proof}

By Theorem \ref{just_one_polynomial}, $T(n,d)\le c_1(n)d^{c_2(n)}$
and, consequently,
\bea \Nc(n,d)\le c_1(n)d^{c_2(n)}\Nc(n-1, n^{2+n}d^2)\,.\eea
Noting that $\Nc(1,d)=T(1,d)=d$ we obtain the inequality
(\ref{samaja_glavnaja}).

\section{Proof of Theorem \ref{second_polynomial}}

\subsection{$f$ with rational coefficients}
\label{Rational_coefficients} Suppose that
$f\in\Q[X_1,\ldots,X_n]$, $n\ge 2$, is irreducible and has
$L(f)=\mathbb
Z^n$. We will show that 
 $2^{n+1}-1$ polynomials
\be f(\epsilon_1 X_1, \ldots, \epsilon_n X_n)\,,\;\;\;\epsilon_i=\pm
1\,, \mbox{ not all } \epsilon_i=1 \label{same_degree}\ee
\be f(\epsilon_1 X_1^2, \ldots, \epsilon_n
X_n^2)\,,\;\;\;\epsilon_i=\pm 1\,.\label{twice_degree}\ee
satisfy all conditions of the theorem.

The condition (i) clearly holds for all polynomials
(\ref{same_degree})--(\ref{twice_degree}).
Suppose now that $f$ divides one of the polynomials
(\ref{same_degree}).
Let us consider the lattice
\bea L_2=\left\{(x_1,\ldots,x_n)\in \Z^n:
\frac{1-\epsilon_1}{2}x_1+\ldots+ \frac{1-\epsilon_n}{2}x_n \equiv 0
\mod 2\right\}\, \eea
with the same choice of $\epsilon_i$. Note that $\det(L_2)=2$ and
thus $L_2\varsubsetneq\Z^n$.
Then, for some ${\ve z}\in \Z^n$, we have ${\ve z}+S_f\subset
L_2$. Therefore the lattice $L(f)$ cannot coincide with $\Z^n$, a
contradiction. This argument also implies that the polynomials
(\ref{same_degree}) are pairwise coprime.
Next, if $f$ divides a polynomial $f'$ from (\ref{twice_degree})
then, since $f'\in\Q[X_1^2,\ldots,X_n^2]$, we have that each of
the polynomials (\ref{same_degree}) also divides $f'$. Hence
$2^n\deg f\le \deg f'=2\deg f$, so that $n=1$, a contradiction.
Consequently, the set of polynomials $f_1,\ldots,f_m$ consists of
all the polynomials (\ref{same_degree})--(\ref{twice_degree}).
Then condition (ii) is satisfied.

It remains only to check that the condition (iii) holds. Let
$C={\ve \omega}H$ be a torsion $r$-dimensional coset on the
hypersurface
$\Hy=\Hy (f)$. 
%
There is a root of unity
$\omega$ such that ${\ve \omega}=(\omega^{i_1}, \ldots,
\omega^{i_n})$,
%
%
where we may assume that $\gcd (i_1,\ldots,i_n)=1$ so that,  in
particular,  not all of the $i_1,\ldots,i_n$ are even. Next, we have
\bea f(\omega^{i_1}, \ldots, \omega^{i_n})=0\,\eea
and by part (ii) of Lemma \ref{basic_properties}, also at least
one of the $2^{n+1}-1$ equalities
\bea f(\epsilon_1 \omega^{i_1}, \ldots, \epsilon_n
\omega^{i_n})=0\,,\;\;\;\epsilon_i=\pm 1\,, \mbox{ not all }
\epsilon_i=1 \eea
\bea f(\epsilon_1 \omega^{2i_1}, \ldots, \epsilon_n
\omega^{2i_n})=0\,,\;\;\;\epsilon_i=\pm 1\eea
holds.
Therefore, the torsion point ${\ve \omega}$ lies on a hypersurface
$\Hy'=\Hy(f')$, where $f'$ is one of the polynomials
$f_1,\ldots,f_m$. This settles the case $r=0$.

Suppose now that $r\ge 1$.  We claim that the torsion coset $C$
lies on $\Hy'$. To see this we observe that for all ${\ve
j}\in\Z^r$ we have
\bea f'_{\ve j}({\ve \omega})=f_{\ve j}(\omega^{p\, i_1}, \ldots,
\omega^{p\, i_n})=0\,,\eea
where $p$ is the exponent from the part (ii) of Lemma
\ref{basic_properties}. Hence by (\ref{E-fj=0}), $C$ lies on
$\Hy'$.

\subsection{$f$ with coefficients in $\Q^{\rm ab}$}

We now define the polynomials $f_1,\ldots,f_m$ in the case of $f$
having coefficients lying in a cyclotomic field.
Let us choose $N$
 to be
the smallest integer such that, for some roots of unity
$\zeta_1,\ldots, \zeta_n$, the polynomial $f(\zeta_1
x_1,\ldots,\zeta_n x_n)$ has all its coefficients in
$K=\Q(\omega_N)$, for $\omega_N$ a primitive $N$th root of unity.
Since for $N$  odd $-\omega_N$ is a primitive $(2N)$th root of
unity, we may assume either that $N$ is odd or a multiple of $4$.

We then replace $f$ by this polynomial. When we have found the
polynomials $f_1,\ldots,f_m$ for this new $f$, it is easy to go back
and find those for the original $f$.


\subsubsection{$N$ odd}

Take $\sigma$ to be an automorphism of $K$ taking $\omega_N$ to
$\omega_N^2$. We keep the polynomials $f_i$ that come from
(\ref{same_degree}) and replace the polynomials that come from
(\ref{twice_degree}) by
\be f^\sigma(\epsilon_1 X_1^2, \ldots, \epsilon_n
X_n^2)\,,\;\;\;\epsilon_i=\pm 1\,,\;\;\;\mbox{not divisible by
}\;f\,.\label{twice_degree_sigma}\ee
We then claim that any torsion coset of $\Hy(f)$ either lies on
one of the $2^n-1$ hypersurfaces defined by (\ref{same_degree}) or
on one of the $2^n$ hypersurfaces defined by one of the
polynomials (\ref{twice_degree_sigma}). Take a torsion coset
$C=(\omega_l^{i_1},\ldots,\omega_l^{i_n})H$ of $\Hy(f)$, with
$\gcd(i_1,\ldots,i_n)=1$. If $4\nmid l$ then we can extend
$\sigma$ to an automorphism of $K(\omega_l)$ which takes
$\omega_l$ to one of $\pm \omega_l^2$. Therefore, the coset $C$
also lies on a hypersurface defined by one of the polynomials
(\ref{twice_degree_sigma}). On the other hand, if $4|l$, we put
$4k=\lcm(l,N)$. Then the automorphism, $\tau$ say, of
$K(\omega_l)=\Q(\omega_{4k})$ mapping
$\omega_{4k}\mapsto\omega_{4k}^{2k+1}$ takes $\omega_l\mapsto
\omega_l^{2k+1}=-\omega_l$ and $\omega_N\mapsto
\omega_N^{2k+1}=\omega_N$. Thus, $C$ lies on a hypersurface
defined by one of the polynomials (\ref{same_degree}).

\subsubsection{$4|N$}

We take the same coset $C$ as in the previous case, again put
$4k=\lcm(l,N)$, and use the same automorphism $\tau$. Then $\tau$
takes $\omega_l\mapsto\omega_l^{2k}\omega_l=\pm\omega_l$ and
$\omega_N\mapsto\omega_N^{2k}\omega_N=\pm\omega_N$. We now consider
separately the four possibilities for these signs. Firstly, from the
definition of $k$ they cannot both be $+$ signs.

If
\bea \tau(\omega_l)=\omega_l\,,\;\;\; \tau(\omega_N)=-\omega_N\eea
then $C$ also lies on $\Hy(f^\tau)$. Note that $f^\tau\neq f$, by
the minimality of $N$, so that they have a proper intersection.

If
\bea \tau(\omega_l)=-\omega_l\,,\;\;\; \tau(\omega_N)=\omega_N\eea
then $C$ also lies on a hypersurface defined by one of the
polynomials (\ref{same_degree}). As $L(f)=\Z^n$, each has proper
intersection with $f$, as we saw in Section
\ref{Rational_coefficients}.

Finally, if
\bea \tau(\omega_l)=-\omega_l\,,\;\;\;
\tau(\omega_N)=-\omega_N\eea
then $C$ also lies on one of the hypersurfaces $\Hy(f_i^\tau)$, for
$f_i$ in (\ref{same_degree}). Suppose that for instance $f$ and
$f^\tau(-X_1,X_2,\ldots,X_n)$ have a common component, so that
$f^\tau(-X_1,X_2,\ldots,X_n)=f(X_1,X_2,\ldots,X_n)$. Then we have
\bea f(\omega_N X_1,X_2,\ldots,X_n)^\tau=f^\tau(-\omega_N
X_1,X_2,\ldots,X_n)=f(\omega_N X_1,X_2,\ldots,X_n)\,.\eea
For any coefficient $c$ of $f(\omega_N X_1,X_2,\ldots,X_n)$, write
$c=a+\omega_N$b, where $a,b\in \Q(\omega^2_N)$. Then
$c^\tau=a-\omega_N b=c$, so that $b=0$, $c\in\Q(\omega^2_N)$.
Consequently, $f(\omega_N X_1,X_2,\ldots,X_n)\in
\Q(\omega_N^2)[X_1,\ldots,X_n]$, contradicting the minimality of
$N$.
The same argument applies for other polynomials
(\ref{same_degree}).
Thus, $C$ lies on one of $2^{n+1}-1$ subvarieties defined by the
polynomials (\ref{same_degree}) and the polynomials
\bea f^\tau(\epsilon_1 X_1, \ldots, \epsilon_n
X_n)\,,\;\;\;\epsilon_i=\pm 1\,. \eea

\subsection{$f$ with coefficients in $\C$}

Let  $L$ be the coefficient field of $f$. Suppose that $L$ is not
a subfield of $\Q^{\rm ab}$. Without loss of generality, assume
that at least one coefficient of $f$ is equal to $1$ and choose an
automorphism $\sigma\in \mbox{Gal}(L/\Q^{\rm ab})$ which does not
fix $f$. Then since all roots of unity belong to $\Q^{\rm ab}$,
$f$ and $f^\sigma$ have the same torsion cosets. Further, $f$ and
$f^\sigma$ have no common component. Thus in this case we can take
the set of $f_i$ to be the single polynomial $f^\sigma$.

\section{The algorithm}
\label{Algorithm}

Let ${\mathcal V}$ be an algebraic subvariety of $\Gm^n$. In this
section we will describe a new recursive algorithm that finds all
maximal torsion cosets on ${\mathcal V}$. The algorithm consists
of several reduction steps that reduce the problem to finding
maximal torsion cosets of a finite number of subvarieties of
$\Gm^{n-1}$. When $n=2$ we can apply the algorithm of Beukers and
Smyth \cite{Beukers-Smyth}.

 \subsection{Hypersurfaces} We first consider a hypersurface $\Hy$ defined by a polynomial
$f\in \C[X_1,\ldots,X_n]$ with $f=\prod h_i$, where $h_i$ are
irreducible polynomials. By Lemma \ref{coset_via_system}, the
$(n-1)$-dimensional torsion cosets on $\Hy$ will precisely
correspond to the factors $h_j$ of the form ${\ve X}^{{\ve
u}_j}-\omega_j{\ve X}^{{\ve v}_j}$, where $\omega$ is a root of
unity.
Now we will assume without loss of generality that $f$ is
irreducible and $\Hy$ contains no torsion cosets of dimension
$n-1$. Then we proceed as follows.
\begin{itemize}

\item[H1.] The proofs of Lemmas
\ref{New_polynomial_with_full_lattice},
\ref{New_polynomial_with_Z^n} and Theorem \ref{second_polynomial}
are effective. Consequently, applying Lemmas
\ref{New_polynomial_with_full_lattice} and
\ref{New_polynomial_with_Z^n}, we may assume without loss of
generality that $L(f)=\Z^n$. Next, applying
 Theorem \ref{second_polynomial}, we get $m<2^{n+1}$ polynomials
$f_1,\ldots,f_m$ satisfying conditions (i)--(iii) of this theorem.

\item[H2.] For $1\le k\le m$, calculate $g_k={\rm
Res}(f,f_k,X_n)$. Find all isolated torsion points ${\ve \zeta}_1,
{\ve \zeta}_2, \ldots$ and all maximal torsion cosets $D_1, D_2,
\ldots$ of positive dimension on the hypersurfaces $\Hy(g_k)$ of
$\Gm^{n-1}$. For each coset $D_i={\ve \eta}_iH_{B_i}$, take a
primitive vector ${\ve a}_i\in B_i$ and put $\omega_i={\ve
\eta}_i^{{\ve a}_i}$.

\item[H3.] For each torsion point ${\ve
\zeta}_i=(\zeta_{i1},\ldots,\zeta_{i\,n-1})$, if
$f(\zeta_{i1},\ldots,\zeta_{i\,n-1}, X_n)$ is identically zero
then the coset
\bea (\zeta_{i\,1},\ldots,\zeta_{i\,n-1},t)\eea
lies on ${\mathcal H}$. Otherwise, solving the polynomial equation
$f(\zeta_{i\,1},\ldots,\zeta_{i\,n-1}, X_n)$ in $X_n$, we will
find all torsion  points ${\ve \zeta}$ on $\Hy$ with $\pi({\ve
\zeta})={\ve \zeta}_i$. When all torsion cosets of positive
dimension on $\Hy$ are found, we can easily determine which of the
torsion points ${\ve \zeta}$ are isolated.

\item[H4.] For each $D_i$, extend the vector ${\ve a}_i$ to a
basis ${\bf B}_i=(({\ve a}_i,0),{\ve z}_2,\ldots, {\ve
z}_{n})$ of $\Z^n$. Find all maximal torsion cosets $E_1, E_2,
\ldots$ on the hypersurface in $\Gm^{n-1}$ defined by the
polynomial $f^{{\bf B}_i}(\omega_i, Y_2,\ldots,Y_n)$. For
each $E_j={\ve \rho}_jH_{P_j}$ say with ${\ve
\rho}_j=(\rho_{j\,2},\ldots,\rho_{j\,n})$ put ${\ve
\omega}_j=(\omega_i, \rho_{j\,2},\ldots,\rho_{j\,n})$ and
$A_j=\{(z, p_2,\ldots,p_n): z\in \Z\,,\;(p_2,\ldots,p_n)\in
P_j\}$. Now the cosets $({\ve \omega}_jH_{A_j})^{{\bf
B}_i^{-1}}$ are the maximal torsion cosets on $\Hy$.

\end{itemize}

\subsection{General subvarieties} Suppose now that ${\mathcal V}$ is defined by the
polynomials $f_1,\ldots,f_t\in \C[X_1,\ldots,X_n]$, when $t\ge 2$.

\begin{itemize}

\item[V1.] Find all isolated torsion points ${\ve \zeta}_1, {\ve
\zeta}_2, \ldots$ and all maximal torsion cosets $D_1, D_2,
\ldots$ of positive dimension on the hypersurface $\Hy(f_1)$. Then
${\ve \zeta}_1, {\ve \zeta}_2, \ldots$, if on $\V$, are isolated
torsion points on $\V$ as well.

\item[V2.] For each coset $D_i={\ve \eta}_iH_{B_i}$, take a
primitive vector ${\ve a}_i\in B_i$, put $\omega_i={\ve
\eta}_i^{{\ve a}_i}$ and extend the vector ${\ve a}_i$ to a basis
${\bf B}_i=({\ve a}_i,{\ve z}_2,\ldots, {\ve z}_{n})$ of
$\Z^n$. Find all maximal torsion cosets $E_1, E_2, \ldots$ on the
subvariety of $\Gm^{n-1}$ defined by the polynomials
$f_k^{{\bf B}_i}(\omega_i, Y_2,\ldots,Y_n)$, $k=2,\ldots, t$.
For each $E_j={\ve \rho}_jH_{P_j}$ with ${\ve
\rho}_j=(\rho_{j\,2},\ldots,\rho_{j\,n})$ put ${\ve
\omega}_j=(\omega_i, \rho_{j\,2},\ldots,\rho_{j\,n})$ and
$A_j=\{(z, p_2,\ldots,p_n): z\in \Z\,,\;(p_2,\ldots,p_n)\in
P_j\}$. Now the cosets $({\ve \omega}_jH_{A_j})^{{\bf
B}_i^{-1}}$, along with the isolated torsion points found in step
V1, are the maximal torsion cosets on $\V$.

\end{itemize}

The described algorithm clearly stops after a finite number of
steps and the proofs of Theorems \ref{just_one_polynomial} and
\ref{Polynomial_bound} show that the algorithm finds all maximal
torsion cosets on $\V$. Furthermore, the constants $c_i(n,d)$ give
explicit bounds for the degrees of the polynomials generated at
each step.

\section{Acknowledgement}

The authors are very grateful to Professors Patrice Philippon and Andrzej Schinzel for
important comments and to Doctor Tristram De Piro for helpful
discussions.

School of Mathematics and Wales Institute of Mathematical and Computational Sciences, Cardiff University, Senghennydd Road, Cardiff CF24 4AG UK

{\em E-mail address:} alievi@cf.ac.uk

School of Mathematics and Maxwell Institute for Mathematical
Sciences, University of Edinburgh, Kings Buildings, Edinburgh EH9
3JZ UK

{\em E-mail address:}  C.Smyth@ed.ac.uk

\enddocument